\documentclass[aip,graphicx]{revtex4-1}

\usepackage{enumerate}
\usepackage{dsfont}
\usepackage{amsfonts}
\usepackage{amssymb}
\usepackage[latin1]{inputenc}
\usepackage[T1]{fontenc}
\usepackage{graphicx}
\usepackage{mathtools}
\usepackage[dvipsnames]{xcolor}
\usepackage{bbm}
\usepackage{soul}
\usepackage[colorlinks=true,urlcolor=darkred,unicode]{hyperref}

\definecolor{darkred}{RGB}{153,0,0}
\usepackage[colorlinks=true]{hyperref}
\usepackage{amsthm}

\newcommand{\R}{\mathbb R}

\newcommand{\D}{\text{d}}

\newcommand{\db}{\bar\partial}

\newtheorem{prop}{Proposition}
\newtheorem{cor}[prop]{Corollary}
\newtheorem{lem}[prop]{Lemma}
\newtheorem{theor}[prop]{Theorem}

\newtheorem{defi}[prop]{Definition}
\newtheorem{ex}[prop]{Example}

\newcommand{\bluetext}[1]{{#1}}

\draft 

\begin{document}


\title{Berwald $m$-Kropina Spaces of Arbitrary Signature: Metrizability and Ricci-Flatness} 



\author{Sjors Heefer}
\email{s.j.heefer@tue.nl}
\affiliation{Department of Mathematics and Computer Science, Eindhoven University of Technology, Eindhoven, The Netherlands}
%
%


\date{\today}

\begin{abstract}
The (pseudo-)Riemann-metrizability and Ricci-flatness of Finsler spaces with $m$-Kropina metric $F = \alpha^{1+m}\beta^{-m}$ of Berwald type are investigated. We prove that the affine connection \bluetext{of} $F$  can locally be understood as the Levi-Civita connection of some (pseudo-)Riemannian metric if and only if the Ricci tensor of the canonical affine connection is symmetric. We also obtain a third equivalent characterization in terms of the covariant derivative of the 1-form $\beta$. We use these results to classify all locally metrizable $m$-Kropina spaces whose 1-forms have a constant causal character. In the special case where the first de Rham cohomology group of the underlying manifold is trivial (which is true of simply connected manifolds, for instance), we show that global metrizability is equivalent to local metrizability and hence, in that case, our necessary and sufficient conditions also characterize global metrizability. In addition, we further obtain explicitly all Ricci-flat, locally metrizable $m$-Kropina metrics in $(3+1)$D whose 1-forms have a constant causal character. In fact, the only possibilities are essentially the following two: either $\alpha$ is flat and $\beta$ is $\alpha$-parallel, or $\alpha$ is a pp-wave and $\beta$ is $\alpha$-parallel.
%
\end{abstract}


\pacs{}

\maketitle 



%
%

%

\tableofcontents
\newpage
\section{Introduction}\label{sec:intr}

\noindent A Berwald space is a Finsler space for which the canonical nonlinear connection reduces to a linear connection on the tangent bundle $TM$. This affine connection is necessarily torsion-free. The natural question thus arises whether this affine connection arises as the Levi-Civita connection of some pseudo-Riemannian metric. Simply put, is (the affine connection on) every Berwald space metrizable? In the classical positive definite setting, this is indeed the case, as was first shown in (Szab\'o, 1981)\cite{Szabo}.

The proofs of this result \cite{Szabo, Vincze2005} rely on procedures such as averaging \cite{CrampinAveraging} over the indicatrix, 
for which it is essential that the Finsler metric $F$ is sufficiently smooth and defined everywhere on the slit tangent bundle $TM_0$. In the case of Finsler metrics of indefinite signature, however, the domain $\mathcal A\subset TM$ where $F$ is defined is typically only a proper subset of $TM_0$ and hence the proofs do not extend to this case. And even in the special case that $\mathcal A = TM_0$, the fact that the indicatrix is not compact in indefinite signatures poses problems. 

It was shown in (Fuster et al., 2020)\cite{Fuster_2020} that Szab\'o's metrization theorem indeed does not generalize to the more general setting. In (Heefer et al., 2023)\cite{Heefer2023mKropNull}, the situation was further investigated and necessary and sufficient conditions for local metrizability were obtained in the specific case of $m$-Kropina metrics with closed null 1-form. More recently, the metrizability of \bluetext{(spatially)} spherically symmetric Berwald \bluetext{spacetimes} was investigated in (Voicu et al., 2024)\cite{voicu2024metrizability}.

Here we extend the analysis of (Heefer et al., 2023)\cite{Heefer2023mKropNull} to arbitrary $m$-Kropina metrics, i.e. Finsler metrics of the form $F = \alpha^{1+m}\beta^{-m}$, with no restriction on the 1-form. The culprit behind all counterexamples to Szab\'o's theorem obtained in (Heefer et al., 2023)\cite{Heefer2023mKropNull} was the fact that the Ricci tensor constructed from the affine connection---the `affine Ricci tensor'---is in general not symmetric. This property---symmetry of the affine Ricci tensor---is in general a necessary condition for local metrizability, and for $m$-Kropina metrics with closed null 1-form it was shown to be a \bluetext{sufficient} condition as well\cite{Heefer2023mKropNull}. 

Here we show that this results extends to all $m$-Kropina metrics, i.e. we prove that any $m$-Kropina metric is locally metrizable if and only if its affine Ricci tensor is symmetric. Additionally, we further obtain a third necessary and sufficient condition for local metrizability in terms of the covariant derivative of the 1-form $\beta$. We use these results to classify all locally metrizable $m$-Kropina spaces whose 1-forms have a constant causal character. 

We also briefly touch upon the issue of global metrizability. We do not obtain a conclusive characterization of global metrizability in general, but in the special case where the first de Rham cohomology group of the underlying manifold is trivial (which is true of simply connected manifolds, for instance), we show that global metrizability is equivalent to local metrizability. Hence, in this case, our necessary and sufficient conditions for local metrizability also characterize global metrizability. 

Finally, as an application of our results established thus far, we obtain explicitly all Ricci-flat, locally metrizable $m$-Kropina metrics in $(3+1)$D whose 1-forms have a constant causal character. Here we note that an $m$-Kropina space is locally metrizable and Ricci-flat \bluetext{(in the standard sense)} if and only if it is \textit{affinely} Ricci-flat---meaning that its affine Ricci tensor \bluetext{(rather than the standard Finsler-Ricci tensor, its symmetrization)} vanishes. Affine Ricci-flatness turns out to be a  rather restrictive property. In fact, the only possibilities are essentially the following two: either $\alpha$ is flat and $\beta$ is $\alpha$-parallel, or $\alpha$ is a pp-wave and $\beta$ is $\alpha$-parallel. 

In the literature, $m$-Kropina metrics have been used for modeling the geometry of spacetime in Finsler extensions of Einstein's general theory of relativity. \cite{HeeferPhdThesis,Fuster:2018djw,Fuster:2015tua,Kouretsis:2008ha,Cohen:2006ky,Gibbons:2007iu}. Indeed $m$-Kropina metrics have been found to yield exact solutions\cite{Heefer_2023_Finsler_grav_waves,Fuster:2018djw,Fuster:2015tua} to the Pfeifer and Wohlfarth's Finslerian extension of Einstein's field equation\cite{Pfeifer:2011xi,Pfeifer:2013gha,Hohmann_2019}; see (Heefer, 2024)\cite{HeeferPhdThesis} for a recent review. Our results put strict bounds on the remaining possibilities for solutions of Berwald $m$-Kropina type to this field equation.

Many of the results obtained in this article were first described in the author's PhD thesis (Heefer, 2024)\cite{HeeferPhdThesis}.

\section{Prerequisites}\label{sec:Basics}
\noindent Below we introduce our notational conventions and we briefly recall several relevant notions in Finsler geometry. For a more in-depth treatment, the reader is referred to (Heefer, 2024)\cite{HeeferPhdThesis}.

\subsection{Some notational conventions}
\noindent In what follows, we will often work in local coordinates. Given a smooth manifold $M$ of dimension $n$ we will typically assume that some chart $\varphi:U\subset M\to \R^n$ is provided, and we will effectively identify any $p\in U$ with its image $x = (x^1,\dots,x^n)  \coloneqq\varphi(p)\in\R^n$ under $\varphi$. If $p\in U$ then each $X_p\in T_pM$ can be written as $X_p = y^i\partial_i\big|_p \eqqcolon y^i\partial_i$ (we will often be sloppy and suppress the dependence on the point $p$), where the tangent vectors $\partial_i \coloneqq \frac{\partial}{\partial x_i}$  make up the coordinate basis of $T_pM$. This decomposition provides natural local coordinates on the tangent bundle $TM$ via the chart
\begin{align}
\tilde\varphi: TU \to \R^n\times\R^n,\qquad \tilde\varphi(p,Y) = (\varphi(p),y^1,\dots,y^n)\eqqcolon (x,y),
\end{align}
where $TU$ is given by
\begin{align}
    TU = \bigcup_{p\in U} \left\{p\right\}\times T_p M\subset TM.
\end{align}
These local coordinates on $TM$ in turn provide a natural basis $\{\partial_i,\bar\partial_i\}_{i=1,\dots,n}$ of its tangent space $T_{(x,y)}TM$ at $(x,y)$, where we define
\begin{align}
\partial_i \coloneqq \frac{\partial}{\partial x^i}, \qquad\quad  \bar{\partial}_i\coloneqq \frac{\partial}{\partial y^i}.
\end{align}

\noindent Finally, we will use the notation $\D x^i\D x^j$ for the symmetrized tensor product of 1-forms, i.e. $\D x^i\D x^j \equiv \tfrac{1}{2}(\D x^j\otimes \D x^j + \D x^j \otimes \D x^j)$, and whenever we work specifically in Lorentzian signature we will adhere to the sign convention $(-,+,\dots,+)$.

\subsection{Basic notions in Finsler geometry}
\noindent For our purposes, a Finsler space will be a triple $(M,\mathcal A, F$), where $M$ is a smooth manifold, $\mathcal A$ a conic subbundle of $TM$, and $F$ is a Finsler metric on $\mathcal A$, defined as follows.

\begin{defi}
A conic subbundle of $TM$ is an open subset $\mathcal A\subset TM$ which is conic in the sense that $(x,\lambda y)\in\mathcal A$ for any $(x,y)\in\mathcal A$ and $\lambda>0$, and which satisfies $\pi(\mathcal A) = M$, where $\pi:TM\to M$ is the canonical projection of the tangent bundle. The latter property says that the fibers $\mathcal A_{x}$ of $\mathcal A$ are nonempty.
\end{defi}

\begin{defi}\label{def:Finsler_metric}
A Finsler metric on a conic subbundle $\mathcal A$ is a smooth map $F:\mathcal A\to [0,\infty)$ such that
\begin{itemize}
	\item $F$ is positively homogeneous of degree one:
	\begin{align}\label{eq:F_homogeneity}
	F(x,\lambda y) =\lambda F(x, y)\,,\quad \forall \lambda>0\,;
	\end{align}
	\item The \textit{fundamental tensor}, with components $g_{ij} = \db_i\db_j \left(\frac{1}{2}F^2\right)$, is nondegenerate.
\end{itemize}
\end{defi} 
\noindent The length of a curve $\gamma: (a,b) \to M$ (whose lift stays within $\mathcal A$) is then defined as
\begin{align}\label{eq:lengt_functional}
\ell(\gamma)=\int_a^b  F(\gamma(\lambda),\dot{\gamma}(\lambda))\,\D \lambda, \qquad \qquad\dot{\gamma}=\frac{\D\gamma}{\D\lambda}.
\end{align}
We will sometimes omit the specification of $\mathcal A$ and say, loosely speaking, that $F$ is a Finsler metric on $M$. We say that $F$ is positive definite if $g_{ij}$ is positive definite, and more generally that $F$ has a certain signature if $g_{ij}$ has that signature.

The connection coefficients of the canonical torsion-free, metric-compatible homogeneous nonlinear connection are given by
\begin{align}\label{eq:nonlinear_connection_explicit}
N_i^j = \tfrac{1}{4}\bar\partial_i\left[g^{jk}\left(y^m\partial_m\bar\partial_k L - \partial_k L\right)\right].
\end{align}
This nonlinear connection yields a smooth decomposition of the tangent bundle $T\mathcal A$ into a horizontal and a vertical subbundle\footnote{For alternative equivalent definitions, see (Javaloyes et al., 2022)\cite{javaloyes2023einsteinhilbertpalatini} and references therein.},
\begin{align}
T\mathcal A = H\mathcal A \oplus V\mathcal A,
\end{align}
where $\oplus$ denotes the Whitney sum of vector bundles. 
The vertical subbundle $V\mathcal A$ is canonically defined as
\begin{align}
V\mathcal A = \ker (\D\pi) =\text{span}\left\{\bar\partial_i\right\}.
\end{align}
The horizontal subbundle, on the other hand, is defined by the nonlinear connection as
\begin{align}
H\mathcal A = \text{span}\left\{\delta_i\right\},\qquad \delta_i\equiv \partial_i - N^j{}_i\db_j,
\end{align}
where we have also introduced the horizontal derivatives $\delta_i$.

The curvature tensor of the nonlinear connection has components
\begin{align}\label{eq:finsll_curvv}
    R^k_{ij} &= \delta_i N^k_j - \delta_j N^k_i,
\end{align}
and from this we can further define the following two quantities:
\begin{align}\label{eq:Ricci_defs}
    \text{Ric} = R^i{}_{ij}y^j,\qquad\quad R_{ij} = \tfrac{1}{2}\db_i \db_j\text{Ric}.
\end{align}
In the literature, $R_{ij}$ is often just referred to as the Ricci tensor, but we will give it the name \textit{Finsler-Ricci tensor} to distinguish it from the \textit{affine Ricci tensor} that will be introduced momentarily for Berwald spaces. Similarly, we will refer to $\mathrm{Ric}$ as the \textit{Finsler-Ricci scalar}. From homogeneity and Euler's theorem, it follows that the second relation in \eqref{eq:Ricci_defs} can be inverted as
\begin{align}
    \text{Ric} = R_{ij} y^i y^j.
\end{align}
Ric and $R_{ij}$ thus contain the same information, and in particular, we have \mbox{$\text{Ric}=0$} if and only if $R_{ij}=0$, in which the Finsler space is said to be \textit{Ricci-flat}.

\subsection{Berwald spaces}\label{sec:Berwald}

\noindent A Finsler space is said to be of \textit{Berwald} type, or simply Berwald, if the canonical nonlinear connection reduces to a (necessarily smooth) linear connection on $TM$, or in other words, an affine connection on the base manifold $M$, meaning that the connection coefficients are of the form $N^k_i = \Gamma^k_{ij}(x)y^j$ for certain coefficients $\Gamma^k_{ij}(x)$. In this case, the functions $\Gamma^i_{jk}$ are precisely the Christoffel symbols of said affine connection on $M$. We will refer to this affine connection as the associated affine connection, or simply \textit{the affine connection} on the Berwald space. It follows from the torsion-freeness of the nonlinear connection that the affine connection is torsion-free as well.

The curvature tensors \eqref{eq:finsll_curvv}-\eqref{eq:Ricci_defs} of a Berwald space can be written as
\begin{align}
\label{eq:symm_ricci_Berwald}
R^k_{ij} = \bar  R^k{}_{\ell ij} (x)y^\ell, \,\,\,\,\,\, \text{Ric} = \bar R_{ij}(x)y^i y^j, \,\,\,\,\,\, R_{ij}= \tfrac{1}{2}\left(\bar R_{ij}(x) + \bar R_{ji}(x)\right)
\end{align}
in terms of the curvature tensor and Ricci tensor of the affine connection, given by
\begin{align}\label{eq:affine_curvatures}
    \bar R^k{}_{\ell ij} = \partial_i\Gamma^k_{j\ell} - \partial_j\Gamma^k_{i\ell} + \Gamma^k_{im}\Gamma^m_{j\ell} - \Gamma^k_{jm}\Gamma^m_{i\ell}, \qquad \bar R_{lk} = \bar R^i{}_l{}_{ik},
\end{align}
respectively. We will refer to $\bar R^k{}_{\ell ij}$ and $\bar R_{lk}$ as (the components of) the \textit{affine curvature tensor} and \textit{affine Ricci tensor}, respectively. Since the last identity in \eqref{eq:symm_ricci_Berwald} will prove to be of particular interest later, we restate it as a lemma.
\begin{lem}
\label{lem:RicciTensors}
The Finsler-Ricci tensor of a Berwald space is the symmetrization of the affine Ricci tensor.
\end{lem}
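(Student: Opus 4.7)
The lemma is the final identity in equation \eqref{eq:symm_ricci_Berwald}, so I would present it as a direct computation once the Berwald structure of the nonlinear connection is unpacked. The plan is to expand everything in coordinates and let the quadratic dependence on $y$ do the work.

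First I would substitute the Berwald form $N^k_i = \Gamma^k_{ij}(x)y^j$ into the formula \eqref{eq:finsll_curvv} for $R^k{}_{ij}$. Since the $\Gamma$'s depend only on $x$, the horizontal derivatives $\delta_i = \partial_i - N^j{}_i\bar\partial_j$ reduce to $\partial_i$ when applied to the $x$-dependent parts, and the quadratic terms in $y$ reproduce exactly the last two terms of the affine curvature in \eqref{eq:affine_curvatures}. A short manipulation yields $R^k{}_{ij} = \bar R^k{}_{\ell ij}(x)\,y^\ell$, the first identity in \eqref{eq:symm_ricci_Berwald}.

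Next I would contract using the definition \eqref{eq:Ricci_defs} of the Finsler-Ricci scalar:
\begin{align}
\text{Ric} = R^i{}_{ij}\,y^j = \bar R^i{}_{\ell ij}(x)\,y^\ell y^j = \bar R_{\ell j}(x)\,y^\ell y^j,
\end{align}
using the definition $\bar R_{\ell j} = \bar R^i{}_{\ell ij}$ from \eqref{eq:affine_curvatures}. This gives the second identity of \eqref{eq:symm_ricci_Berwald} and shows that $\text{Ric}$ is a quadratic form in $y$ with $x$-dependent coefficients.

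Finally, I would apply the two vertical derivatives prescribed by \eqref{eq:Ricci_defs}. Because $\bar R_{\ell j}(x)$ does not depend on $y$, one has $\bar\partial_j(\bar R_{\ell m}y^\ell y^m) = (\bar R_{jm}+\bar R_{mj})y^m$, and a second derivative $\bar\partial_i$ produces $\bar R_{ji}+\bar R_{ij}$. Dividing by two gives
\begin{align}
R_{ij} = \tfrac{1}{2}\bar\partial_i\bar\partial_j\,\text{Ric} = \tfrac{1}{2}\bigl(\bar R_{ij}(x) + \bar R_{ji}(x)\bigr),
\end{align}
which is precisely the claim. There is no real obstacle here: the only subtle point is recognising that $y^\ell y^j$ automatically symmetrises the coefficient tensor after two applications of $\bar\partial$, which is why the antisymmetric part of $\bar R_{ij}$ is killed in the Finsler-Ricci tensor.
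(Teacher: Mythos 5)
Your computation is correct and is exactly the standard derivation of the identities in \eqref{eq:symm_ricci_Berwald}, which the paper states without proof and then restates as this lemma. You have simply filled in the direct calculation the paper leaves implicit, so there is nothing to add.
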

\noindent We say that a Berwald space is \textit{affinely Ricci-flat} if $\bar R_{ij}=0$. Note that this is a stronger condition than (standard) Ricci-flatness, $R_{ij}=0$.

\subsection{Metrizability}

\noindent Given a Finsler space of Berwald type, the canonical nonlinear connection reduces to a linear connection on $TM$ that is torsion-free. This leads to the natural question of whether this linear connection is `metrizable'.

\begin{defi}
    Let $F$ be a Berwald metric with underlying manifold $M$. We say that (the canonical affine connection of) $F$ is metrizable if it arises as the Levi-Civita connection of some pseudo-Riemannian metric on $M$.
\end{defi}

\noindent For readability, we usually omit the phrase `the canonical affine connection of' in between brackets and we simply say that $F$ is metrizable. A slightly weaker property is that of local metrizability.  

\begin{defi}
    A Berwald space with underlying manifold $M$ is said to be locally metrizable if each point in $M$ has a neighborhood that admits a pseudo-Riemannian metric whose Levi-Civita connection coincides with the affine connection restricted to that neighborhood.
\end{defi}

\section{$m$-Kropina metrics}\label{sec:VGR}
\noindent An $m$-Kropina metric (also called generalized Kropina metric, Bogoslovsky metric or Bogoslovsky-Kropina metric) is a Finsler metric of the form 
\begin{align}\label{eq:mkrop}
    F = \alpha^{1+m}\beta^{-m},
\end{align}
where the building blocks
\begin{align}
    \alpha = \sqrt{\left|a_{ij}y^iy^j\right|},\qquad \beta = b_iy^i
\end{align}
are constructed from a \mbox{(pseudo-)Riemannian} metric $a = a_{ij}(x)\D x^i\D x^j$ and a 1-form $b = b_i(x)\D x^i$, respectively, and where $m$ is a real parameter. By a slight abuse of terminology one also refers to $\alpha$ and $\beta$ simply as the \mbox{(pseudo-)Riemannian} metric and the 1-form, respectively. We remark that it follows from our definition of a Finsler metric that \bluetext{(excluding the trivial pseudo-Riemannian case $m=0$)} the 1-form $b$ is nowhere vanishing, which we will henceforth take for granted. When $m=1$ the $m$-Kropina metric reduces to the standard Kropina metric \cite{Kropina} $F = \alpha^2/\beta$.

In the physics literature, spacetimes with a metric of $m$-Kropina type have also been dubbed \textit{Very General Relativity} (VGR) spacetimes \cite{Fuster:2018djw} or \textit{General Very Special Relativity} (GVSR) spacetimes \cite{Kouretsis:2008ha}, introduced as generalizations of \textit{Very Special Relativity} (VSR) \cite{Cohen:2006ky,Gibbons:2007iu} spacetimes. Indeed, as an important application, $m$-Kropina metrics have been found to yield exact solutions\cite{Heefer_2023_Finsler_grav_waves,Fuster:2018djw,Fuster:2015tua} to the Pfeifer and Wohlfarth's Finslerian extension of Einstein's field equation\cite{Pfeifer:2011xi,Pfeifer:2013gha,Hohmann_2019}; see (Heefer, 2024)\cite{HeeferPhdThesis} for a recent review.

We further introduce the notation 
\begin{align}
    |b|^2 \equiv a_{ij}b^ib^j
\end{align}
for the squared norm of $\beta$ with respect to $\alpha$. Also, throughout the remainder of this article, all indices are raised and lowered with $a_{ij}$, and $\mathring\nabla$ denotes the Levi-Civita connection of $a_{ij}$. 

\subsection{The Berwald condition}\label{sec:berwald_condition}
\noindent As shown recently\cite{Heefer2023mKropNull} (see also (Heefer, 2024)\cite{HeeferPhdThesis} for a more in-depth treatment and an alternative proof), an m-Kropina metric $F = \alpha^{1+m}\beta^{-m}$ with $n=\dim M >2$ is of Berwald type if and only if there exists a smooth vector field with components $f^i$ on $M$ such that
\begin{align}\label{eq:C-mtr_mKrop_betrald_cond}
\mathring\nabla_j b_i = m (f_k b^k)a_{ij} + b_i f_j  - m f_i b_j.
\end{align}
Here and throughout the remainder of the article, $\mathring\nabla$ denotes the Levi-Civita connection of $a_{ij}$. This Berwald condition agrees with the one obtained by Matsumoto\cite{handbook_Finsler_vol2_matsumoto}, which was proven under slightly different assumptions.


Whenever the condition \eqref{eq:C-mtr_mKrop_betrald_cond} is satisfied, the affine connection coefficients of the Berwald $m$-Kropina space can be expressed in terms of the Christoffel symbols $\mathring\Gamma^k_{ij}$ of the Levi-Civita connection corresponding to $\alpha$ as follows\cite{HeeferPhdThesis,Heefer2023mKropNull,handbook_Finsler_vol2_matsumoto}
\begin{align}\label{eq:delta_gamma_General}
\Gamma^\ell_{ij} = \mathring\Gamma^\ell_{ij} + \Delta \Gamma^\ell_{ij},\qquad \Delta \Gamma^\ell_{ij} = m a^{\ell k}\left(a_{ij}f_k - a_{jk}f_i - a_{ki}f_j\right)
\end{align}
This implies in particular that for any given data ($\alpha,\beta$) there can be at most one vector field $f^k$ satisfying \eqref{eq:C-mtr_mKrop_betrald_cond}. (This can be seen for instance by considering $\Delta\Gamma^\ell_{ij}$ with $i=j\neq \ell$.)

\section{Characterization of metrizability}
\label{sec:metrizability}

\noindent From here onwards we will focus on $m$-Kropina metrics of Berwald type with $n=\dim M>2$. The main aim in this section is answer the following question: \\

\noindent \textbf{Q: }\textit{Under what conditions can the affine connection on a Berwald $m$-Kropina space be understood, locally, as the Levi-Civita connection of a pseudo-Riemannian metric?}\\

\noindent In \S\ref{sec:metrizability_local}, we obtain necessary and sufficient conditions for local metrizability. Then we discuss several special cases in \S\ref{sec:specialcases}. And finally, in \S\ref{sec:global_metriz}, we have a brief discussion on global metrizability.

\subsection{The main theorem for local metrizability}\label{sec:metrizability_local}

\noindent It is a standard result that the Ricci tensor of any Levi-Civita connection is symmetric. This immediately implies that symmetry of the Ricci tensor constructed from the affine connection of a Berwald space---that is, symmetry of the \textit{affine Ricci tensor} (see Section \ref{sec:Berwald})---is a necessary condition for metrizablilty. To characterize this condition, we will first derive a simple formula for the skew-symmetric part of the affine Ricci tensor, denoted by $\mathcal A(\bar R)$ and having components $\mathcal A(\bar R)_{ij} = \bar R_{[ij]} = \tfrac{1}{2}\left(\bar R_{ij} - \bar R_{ji}\right)$. Moreover, to prevent confusion, we remark that the expression $\D f$ below denotes the exterior derivative of the 1-form $f$ with components $f_i = a_{ij}f^j$ appearing in the Berwald condition \eqref{eq:C-mtr_mKrop_betrald_cond}. We also recall that $n = \dim M$.
\begin{lem}\label{lem:ricci_general}
The skew-symmetric part of the affine Ricci tensor is given by 
\begin{align}\label{eq:lem_skew_R_formula}
    \mathcal A(\bar R) = -\frac{mn}{2} \D f
\end{align}
\end{lem}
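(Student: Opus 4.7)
The plan is to reduce the question to an exterior derivative via the ``second contracted'' Bianchi identity. The key observation, valid for any torsion-free affine connection, is that $\bar R^k{}_{kij}=2\bar R_{[ij]}$. I would derive this by contracting $k$ with $\ell$ in the first Bianchi identity $\bar R^k{}_{\ell ij}+\bar R^k{}_{ij\ell}+\bar R^k{}_{j\ell i}=0$ (valid because the Berwald connection is torsion-free), then applying the antisymmetry of $\bar R^k{}_{\ell ij}$ in its last two indices together with the convention $\bar R_{ij}=\bar R^k{}_{ikj}$ from \eqref{eq:affine_curvatures}.

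Next I would compute $\bar R^k{}_{kij}$ directly from \eqref{eq:affine_curvatures}. A short relabeling of the dummy indices $k\leftrightarrow m$ shows $\Gamma^k_{im}\Gamma^m_{jk}=\Gamma^k_{jm}\Gamma^m_{ik}$, so the two quadratic Christoffel terms cancel. What remains is $\bar R^k{}_{kij}=\partial_i\omega_j-\partial_j\omega_i=(\D\omega)_{ij}$, where $\omega_i\coloneqq\Gamma^k_{ik}$ is the trace of the connection. Hence the second contracted curvature is simply the exterior derivative of the $1$-form $\omega$.

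Finally I would plug in the decomposition \eqref{eq:delta_gamma_General} of $\Gamma$ into its Levi-Civita part and deformation. The Levi-Civita trace $\mathring\Gamma^k_{ik}=\partial_i\log\sqrt{|\det a|}$ is locally exact, and hence annihilated by $\D$, while the deformation trace computes to $\Delta\Gamma^k_{ik}=m(f_i-n f_i-f_i)=-mn\, f_i$ by setting $\ell=k$ in $\Delta\Gamma^\ell_{ij}=m a^{\ell k}(a_{ij}f_k-a_{jk}f_i-a_{ki}f_j)$ and tracing. Therefore $\D\omega=-mn\,\D f$ (with $f_i=a_{ij}f^j$), and combining with the first step gives $\mathcal A(\bar R)=-\tfrac{mn}{2}\D f$ as claimed.

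No real obstacle arises here; once one recognizes $\bar R_{[ij]}$ as half of the second contracted curvature $\bar R^k{}_{kij}$, the rest is the standard fact that this trace is an exterior derivative plus a one-line evaluation of $\Delta\Gamma^k_{ik}$. The only fiddly points are the sign conventions in the antisymmetry identities and the cancellation of the quadratic Christoffel terms, both of which are routine.
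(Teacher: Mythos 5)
Your proof is correct and follows essentially the same route as the paper's: both reduce $\bar R_{[ij]}$ to the exterior derivative of the connection trace $\Gamma^k_{ik}$, dispose of the Levi-Civita contribution (you via the explicit formula $\mathring\Gamma^k_{ik}=\partial_i\log\sqrt{|\det a|}$, the paper via symmetry of the Levi-Civita Ricci tensor), and evaluate $\Delta\Gamma^k_{ik}=-mn f_i$. Your detour through the contracted first Bianchi identity to obtain $\bar R^k{}_{kij}=2\bar R_{[ij]}$ is sound but not needed: the paper reaches the same identity $\bar R_{[ij]}=\partial_{[i}\Gamma^k_{j]k}$ by antisymmetrizing the definition directly, the quadratic Christoffel terms cancelling by the symmetry $\Gamma^k_{ij}=\Gamma^k_{ji}$.
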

\begin{proof}
From the definition \eqref{eq:affine_curvatures} of the affine Ricci tensor of a Berwald space and the fact that $\Gamma^k_{ij} = \Gamma^k_{ji}$ it follows that the skew-symmetric part of the affine Ricci tensor can be written as
\begin{align}
\bar R_{[ij]} \equiv \tfrac{1}{2}\left(\bar R_{ij} - \bar R_{ji}\right)  = \partial_{[i}\Gamma^k_{j]k}.
\end{align}
Since the Levi-Civita connection $\mathring\Gamma^k_{ij}$ has a symmetric Ricci tensor, it follows that $\partial_{[i}\mathring\Gamma^k_{j]k}=0$ and hence we have 
\begin{align}\label{eq:add_eq_}
    \bar R_{[ij]} &= \partial_{[i}\Gamma^k_{j]k} = \partial_{[i}\Delta \Gamma^k_{j]k}=\tfrac{1}{2}\left(\partial_{i}\Delta\Gamma^k_{kj}-\partial_{j}\Delta\Gamma^k_{ki}\right).
\end{align}
Employing the expression \eqref{eq:delta_gamma_General} for $\Delta \Gamma^k_{ij}$, we find that
\begin{align}\label{eq:fsmooth}
    \Delta\Gamma^k_{kj} =  - m n f_j
\end{align}
and this yields
\begin{align}
\bar R_{[ij]} = \frac{mn}{2}(\partial_jf_i-\partial_if_j),
\end{align}
which is precisely the coordinate expression of \eqref{eq:lem_skew_R_formula}, as desired.
\end{proof}
We then have the following characterization of local metrizability. 
\begin{theor}\label{theor:metrizability_main}
For an $m$-Kropina metric  $F = \alpha^{1+m}\beta^{-m}$ of Berwald type with $\dim M>2$, the following are equivalent:
\begin{enumerate}[(i)]
    \item $F$ is locally metrizable;
    \item The affine Ricci tensor is symmetric, $\bar R_{ij} = \bar R_{ji}$;
    \item \bluetext{The 1-form $f_i$ appearing in \eqref{eq:C-mtr_mKrop_betrald_cond} closed.}
\end{enumerate} 
In this case we can locally write  $f_i =\partial_i\psi$ for some $\psi\in C^\infty(M)$ and we may define $\tilde a_{ij} = e^{-2m\psi}a_{ij}$ and $\tilde b_i = e^{-(1+m)\psi}b_i$ and define $\tilde \alpha$ and $\tilde \beta$ accordingly. Then the following statements hold:
\begin{itemize}
    \item $F$ can be written as $F = \tilde\alpha^{1+m}\tilde\beta^{-m}$;
    \item $\tilde \beta$ is parallel with respect to $\tilde \alpha$;
    \item $\tilde a_{ij}$ provides a local metrization of $F$.
\end{itemize}
\end{theor}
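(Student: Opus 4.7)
The plan is to establish (i) $\Rightarrow$ (ii) $\Leftrightarrow$ (iii) $\Rightarrow$ (i), with the last implication simultaneously delivering the explicit local metrization claimed in the second half of the theorem. The implication (i) $\Rightarrow$ (ii) is immediate: any Levi-Civita connection has a symmetric Ricci tensor, so local metrizability forces $\bar R_{ij} = \bar R_{ji}$. The equivalence (ii) $\Leftrightarrow$ (iii) is a direct consequence of Lemma \ref{lem:ricci_general}, since $\mathcal A(\bar R) = -\tfrac{mn}{2}\D f$ vanishes iff $\D f = 0$ (we may assume $m\neq 0$, since $m=0$ is the pseudo-Riemannian case).

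The substantive content lies in (iii) $\Rightarrow$ (i). Starting from $\D f = 0$, the Poincar\'e lemma produces, on any contractible neighborhood, a function $\psi$ with $f_i = \partial_i\psi$. I then form $\tilde a_{ij} = e^{-2m\psi}a_{ij}$ and $\tilde b_i = e^{-(1+m)\psi}b_i$, so that $\tilde\alpha = e^{-m\psi}\alpha$ and $\tilde\beta = e^{-(1+m)\psi}\beta$; the identity $F = \tilde\alpha^{1+m}\tilde\beta^{-m}$ is then a one-line algebraic check, as the factors $e^{-m(1+m)\psi}$ and $e^{m(1+m)\psi}$ cancel. The crux is to verify that $\tilde\beta$ is $\tilde\alpha$-parallel. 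For this I would invoke the standard conformal transformation formula
\begin{equation*}
\tilde{\mathring\Gamma}^k_{ij} = \mathring\Gamma^k_{ij} + \delta^k_i \partial_j \sigma + \delta^k_j \partial_i\sigma - a_{ij}a^{kl}\partial_l\sigma
\end{equation*}
with $\sigma = -m\psi$, so that $\partial_i\sigma = -m f_i$. Substituting this into $\tilde{\mathring\nabla}_j \tilde b_i = \partial_j(e^{-(1+m)\psi}b_i) - \tilde{\mathring\Gamma}^k_{ji}\tilde b_k$, the exponential prefactor should pull out uniformly, and the remaining bracketed expression, after eliminating $\mathring\nabla_j b_i$ via the Berwald condition \eqref{eq:C-mtr_mKrop_betrald_cond}, should collapse to zero on the nose.

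Once $\tilde{\mathring\nabla}\tilde\beta = 0$ is in hand, the Berwald condition \eqref{eq:C-mtr_mKrop_betrald_cond} applied to the alternative representation $F = \tilde\alpha^{1+m}\tilde\beta^{-m}$ is satisfied with vector field $\tilde f \equiv 0$. Invoking the uniqueness of this auxiliary vector field (noted just after \eqref{eq:delta_gamma_General}) together with the formula \eqref{eq:delta_gamma_General} itself, the affine connection of $F$ must coincide with the Levi-Civita connection of $\tilde a$, proving local metrizability and simultaneously exhibiting $\tilde a$ as the metrization. The main obstacle is the conformal computation of $\tilde{\mathring\nabla}_j \tilde b_i$: the derivative of the exponential prefactor, the three terms produced by the Christoffel symbol correction, and the three terms of the Berwald condition must combine to cancel exactly. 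The conformal weights $-2m$ and $-(1+m)$ in the definitions of $\tilde a$ and $\tilde b$ have evidently been reverse-engineered precisely so that this cancellation occurs, so the calculation is ultimately careful bookkeeping rather than requiring new geometric input.
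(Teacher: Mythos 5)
Your proposal is correct and follows essentially the same route as the paper: (i)$\Rightarrow$(ii) via symmetry of Levi-Civita Ricci tensors, (ii)$\Leftrightarrow$(iii) via Lemma \ref{lem:ricci_general}, and (iii)$\Rightarrow$(i) by the same conformal rescaling $\tilde a = e^{-2m\psi}a$, $\tilde b = e^{-(1+m)\psi}b$, with the cancellation in $\tilde{\mathring\nabla}_j\tilde b_i$ indeed working out exactly as you anticipate. The only cosmetic difference is the final step, where the paper cites Prop.~\ref{prop:ab_cc_pre} while you argue via the uniqueness of $f$ and formula \eqref{eq:delta_gamma_General}; these amount to the same thing for $m$-Kropina metrics.
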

\begin{proof}
    We will prove that (i)$\Rightarrow$(ii)$\Rightarrow$(iii)$\Rightarrow$(i). The first two implications are rather straightforward. 

    (i)$\Rightarrow$(ii): \bluetext{If $F$ is locally metrizable then locally, the affine connection is just a Levi-Civita connection in disguise. Since any Levi-Civita connection has a symmetric Ricci tensor, it follows trivially that the affine Ricci tensor of $F$ is symmetric. }
    
    (ii)$\Rightarrow$(iii): If the Ricci tensor is symmetric then Lemma \ref{lem:ricci_general} shows that $\D f =0$. 
    
    The remaining nontrivial part of the proof amounts to showing that (iii) implies (i). 
    
    (iii)$\Rightarrow$(i): If $f_i$ is closed then $f$ is locally exact, i.e. $f_i = \partial_i\psi$ for some (locally defined) smooth function $\psi$ on $M$. Consider the following transformed metric and 1-form, $\tilde a_{ij} = e^{-2m\psi} a_{ij}$ and $\tilde b_i = e^{-(1+m)\psi}$ and define $\tilde \alpha$ and $\tilde\beta$ accordingly. Then $F = \alpha^{1+m}\beta^{-m}=\tilde\alpha^{1+m}\tilde\beta^{-m}$ and it is straightforward to show that the Christoffel symbols of $\tilde a_{ij}$ are given by
    \begin{align}
        \tilde\Gamma^k_{ij} = \mathring\Gamma^k_{ij} - m\left(f_i\delta^k_j+f_j\delta^k_i - f^ka_{ij}\right)
    \end{align}
    in terms of those of $a_{ij}$. Hence if we denote by $\tilde\nabla$ the Levi-Civita connection of $\tilde \alpha$ then we find that
    \begin{align}
        \tilde\nabla_j\tilde b_i = e^{-(1+m)\psi}\left[\mathring\nabla_j b_i -( m (f_k b^k)a_{ij} + b_i f_j  - m f_i b_j)\right] = 0,
    \end{align}
    which vanishes, by \eqref{eq:C-mtr_mKrop_betrald_cond}. In other words, $\tilde b_i$ is parallel w.r.t. $\tilde a_{ij}$. Since we can write $F$ as $F = \tilde\alpha^{1+m}\tilde\beta^{-m}$, it follows from Prop. \ref{prop:ab_cc_pre} below that the affine connection \bluetext{of} $F$ is just the Levi-Civita connection of $\tilde a_{ij}$. Hence $F$ is locally metrizable, namely by $\tilde a_{ij}$, completing the proof. 
\end{proof}
The mapping $(\alpha,\beta)\mapsto (\tilde\alpha,\tilde\beta)$ is a generalization of what is called the $f$\textit{-change} in \mbox{(Matsumoto, 2003)\cite{handbook_Finsler_vol2_matsumoto}}. In the proof we have made use of the following well-known result; see e.g. (Heefer, 2024)\cite{HeeferPhdThesis}.

\begin{prop}\label{prop:ab_cc_pre}
    Let $F=\alpha\phi(\beta/\alpha)$ be an $(\alpha,\beta)$-metric and suppose that $\mathring\nabla_i b_j=0$. Then $F$ is Berwald and the affine connection of $F$ coincides with that of $\alpha$.
\end{prop}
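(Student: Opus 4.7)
My plan is to prove the proposition by directly computing the spray of $F$ and showing it coincides with that of $\alpha$. Since the canonical nonlinear connection satisfies $N^j_i=\bar\partial_iG^j$ with spray coefficients $G^j=\tfrac{1}{4}g^{jk}(y^m\partial_m\bar\partial_k L-\partial_k L)$ for $L=F^2$ (cf.~\eqref{eq:nonlinear_connection_explicit}), it suffices to prove $G^i=\mathring G^i:=\tfrac{1}{2}\mathring\Gamma^i_{jk}(x)y^jy^k$, the spray of $\alpha$. This will immediately yield that $N^j_i=\mathring\Gamma^j_{ik}y^k$ is linear in $y$ (so $F$ is Berwald) and that the associated affine connection is $\mathring\nabla$.

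The key observation is that both $\alpha^2$ and $\beta$ are horizontally constant with respect to the nonlinear connection $\mathring N^j_i=\mathring\Gamma^j_{ik}y^k$ canonically induced by $\mathring\nabla$. Introducing the associated horizontal derivative $\delta_k:=\partial_k-\mathring\Gamma^j_{ik}y^i\bar\partial_j$ on $T\mathcal A$, a direct computation using $\mathring\nabla_ka_{ij}=0$ yields $\delta_k\alpha^2=0$, while the hypothesis $\mathring\nabla_kb_i=0$ yields $\delta_k\beta=0$. Because $L=F^2=\alpha^2\phi^2(\beta/\alpha)$ is a smooth function of $\alpha^2$ and $\beta$ alone, the chain rule then gives $\delta_kL=0$.

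The remaining step is to feed $\delta_kL=0$ into the spray formula. A one-line check shows that $[\delta_k,\bar\partial_l]f=\mathring\Gamma^j_{kl}\bar\partial_jf$ for any smooth $f$, so the trivial identity $\bar\partial_l\delta_kL=0$ forces $\delta_k\bar\partial_lL=\mathring\Gamma^j_{kl}\bar\partial_jL$. Unfolding the definition of $\delta_k$ and contracting with $y^k$ then produces $y^k\partial_k\bar\partial_lL-\partial_lL=\mathring\Gamma^j_{mk}y^my^k\bar\partial_j\bar\partial_lL=2g_{jl}\mathring\Gamma^j_{mk}y^my^k$, where we used $g_{jl}=\tfrac{1}{2}\bar\partial_j\bar\partial_lL$. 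Substituting into the formula for $G^i$ and cancelling $g^{il}g_{jl}=\delta^i_j$ yields $G^i=\mathring G^i$, as required.

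The only subtle point—and the main obstacle the argument must navigate—is the noncommutativity of $\delta_k$ and $\bar\partial_l$; once this is handled via the commutator identity above, everything else is routine bookkeeping. An alternative route would be to invoke the well-known explicit formulae for the spray of a general $(\alpha,\beta)$-metric, in which the correction $G^i-\mathring G^i$ is a polynomial in the symmetric and skew-symmetric parts of $\mathring\nabla b$ (both of which vanish by hypothesis), but the self-contained derivation above is cleaner and keeps the exposition free of extraneous formulae.
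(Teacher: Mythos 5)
Your argument is correct. Note that the paper itself offers no proof of this proposition---it is stated as a well-known result with a citation to the author's thesis---so there is nothing to compare against line by line; your self-contained derivation via the horizontal derivative $\delta_k=\partial_k-\mathring\Gamma^j_{ik}y^i\bar\partial_j$ of the Levi-Civita connection is the standard clean route, and every step checks out: $\delta_k(a_{ij}y^iy^j)=0$ follows from $\mathring\nabla a=0$, $\delta_k\beta=y^i\mathring\nabla_kb_i=0$ by hypothesis, the chain rule gives $\delta_kL=0$ (valid since $\alpha\neq 0$ on the domain of $F$, so locally $L$ is a smooth function of $a_{ij}y^iy^j$ and $\beta$), and the commutator identity $[\delta_k,\bar\partial_l]=\mathring\Gamma^j_{kl}\bar\partial_j$ is exactly right. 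The one step worth spelling out is where $\partial_lL$ appears after contracting with $y^k$: you need $\mathring\Gamma^j_{kl}y^k\bar\partial_jL=\partial_lL$, which is a second application of $\delta_lL=0$ rather than mere ``unfolding''; with that made explicit, the cancellation $g^{il}g_{jl}=\delta^i_j$ (using nondegeneracy of the fundamental tensor) delivers $G^i=\mathring G^i$, hence $N^j_i=\mathring\Gamma^j_{ik}y^k$ and both claims of the proposition at once.
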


\subsection{Special types of 1-forms $b$}\label{sec:specialcases}

\noindent Having established the general criteria for local metrizability in Theorem \ref{theor:metrizability_main}, we now turn to several specific possibilities with respect to the causal character of the 1-form $b$:
\begin{enumerate}
    \item $b$ is nowhere null;
    \item $b$ is null and closed;
    \item $b$ is null but not necessarily closed.
\end{enumerate}

\subsubsection{$b$ is nowhere null}
\noindent In this case, we can show that $f_i$ is necessarily closed, and hence our theorem guarantees local metrizability. This case was already known to Matsumoto\cite{handbook_Finsler_vol2_matsumoto}. To see that $f_i$ is closed, note that contracting \eqref{eq:C-mtr_mKrop_betrald_cond} with $b^i$ leads to
\begin{align}
    \tfrac{1}{2}\mathring\nabla_j\left(|b|^2\right) = |b|^2f_j,
\end{align}
from which it follows, as long as $b$ is nowhere null, that 
\begin{align}\label{eq:f_in_terms_of_b}
    f_j =\partial_j\ln\left(\sqrt{||b|^2|}\right).
\end{align}

\noindent Hence we have the following corollaries.

\begin{cor}\label{prop:non-nullmetrizable}
    An $m$-Kropina space with $|b|^2\neq 0$ is locally metrizable.
\end{cor}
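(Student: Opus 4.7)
The plan is to invoke Theorem \ref{theor:metrizability_main} by showing that under the hypothesis $|b|^2 \neq 0$ the 1-form $f_i$ appearing in the Berwald condition \eqref{eq:C-mtr_mKrop_betrald_cond} is automatically exact, hence closed. The entire argument reduces to a short calculation exploiting metric compatibility of $\mathring\nabla$, and in fact the key identity has already been written down in the paragraphs preceding the statement of the corollary.

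First I would contract \eqref{eq:C-mtr_mKrop_betrald_cond} with $b^i$. By metric compatibility of $\mathring\nabla$, the left-hand side becomes $\tfrac12 \mathring\nabla_j |b|^2$. On the right-hand side, the first term contributes $m(f_k b^k)\, b_j$, the second contributes $|b|^2 f_j$, and the third contributes $-m(f_i b^i)\, b_j$; the first and third cancel, yielding
\begin{equation}
\tfrac12 \mathring\nabla_j |b|^2 = |b|^2 f_j ,
\end{equation}
exactly the relation flagged in the discussion preceding the corollary.

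Under the hypothesis $|b|^2 \neq 0$, I can divide pointwise to obtain
\begin{equation}
f_j = \partial_j\!\left(\tfrac12 \ln\bigl|\,|b|^2\bigr|\right),
\end{equation}
so $f$ is locally a gradient and in particular closed. Condition (iii) of Theorem \ref{theor:metrizability_main} is therefore satisfied, and local metrizability follows immediately.

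There is no real obstacle here beyond the elementary cancellation on the right-hand side; the only point worth flagging is that in this regime one even recovers an explicit formula $\psi = \tfrac12 \ln\bigl|\,|b|^2\bigr|$ for the conformal factor that appears in the metrization recipe $\tilde a_{ij} = e^{-2m\psi} a_{ij}$ of Theorem \ref{theor:metrizability_main}, so the local pseudo-Riemannian metrization can be exhibited explicitly rather than merely asserted.
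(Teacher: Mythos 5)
Your proof is correct and follows essentially the same route as the paper: contracting the Berwald condition with $b^i$, using metric compatibility to get $\tfrac12\mathring\nabla_j|b|^2 = |b|^2 f_j$, and dividing by $|b|^2$ to exhibit $f$ as an exact (hence closed) 1-form, so that Theorem \ref{theor:metrizability_main}(iii) applies. The explicit potential $\psi=\tfrac12\ln\bigl|\,|b|^2\bigr|$ you note is exactly the paper's equation \eqref{eq:f_in_terms_of_b}, which the paper later reuses to upgrade this to global metrizability.
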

\begin{cor}\label{prop:posdefmetrizable}
    An $m$-Kropina space with positive definite $\alpha$ is locally metrizable.
\end{cor}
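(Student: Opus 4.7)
The plan is to derive this corollary as an immediate consequence of the preceding Corollary \ref{prop:non-nullmetrizable}, since the positive-definite case simply forces $|b|^2$ to be nonzero. Concretely, I would first recall from the definition of an $m$-Kropina metric in Section \ref{sec:VGR} that the 1-form $b$ is nowhere vanishing (this is built into the setup, excluding only the trivial case $m=0$).

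Next I would observe that if $\alpha$ is positive definite, then $a_{ij}$ is a positive definite pseudo-Riemannian (i.e. genuine Riemannian) metric on $M$. Positive definiteness means that the quadratic form $v\mapsto a_{ij}v^iv^j$ vanishes only on the zero vector. Applied pointwise to the vector field with components $b^i = a^{ij}b_j$, this gives $|b|^2_x = a_{ij}(x)b^i(x)b^j(x) > 0$ whenever $b_x \neq 0$. Combined with the nowhere-vanishing property of $b$, this shows $|b|^2 > 0$ everywhere on $M$, and in particular $|b|^2 \neq 0$.

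Finally, I would invoke Corollary \ref{prop:non-nullmetrizable} to conclude that $F$ is locally metrizable. There is no real obstacle here: the entire argument is a two-line pointwise observation chaining the standing assumption that $b$ never vanishes with the definiteness of $a_{ij}$, and then a direct appeal to the already-established non-null case.
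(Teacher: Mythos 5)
Your proposal is correct and matches the paper's (implicit) reasoning exactly: positive definiteness of $a_{ij}$ together with the standing assumption that $b$ is nowhere vanishing forces $|b|^2>0$, so the statement is an immediate instance of Cor.~\ref{prop:non-nullmetrizable}. Nothing further is needed.
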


\noindent \bluetext{In fact, it turns out that in these cases we even have global metrizability. This will be discussed in Section \ref{sec:global_metriz}.}

\subsubsection{$b$ is null and closed}
\noindent This case was the subject of our earlier paper\cite{Heefer2023mKropNull}. We restate the main relevant result here.

\begin{prop}\label{prop:main_previous}
If $b$ is null and closed then $F$ is locally metrizable if and only if around each point in $M$ there exist coordinates $(u,v,x^3,\dots,x^n)$ such that {\normalfont$b = \D u$} and
\begin{align}\label{eq:form_of_a}
a = -2\D u\D v + \left[\tilde H(u,x) + \rho(u)v\right]\D u^2 + 2W_a(u,x)\D u\D x^a + h_{ab}(u,x)\D x^a\D x^b,
\end{align}
where $a,b = 3,\dots n$ and $\tilde H, \rho, W_a, h_{ab}$ are smooth functions.

In this case, the affine connection restricted to the chart corresponding to the coordinates $(u,v,x^3,\dots x^n)$ is metrizable by the following pseudo-Riemannian metric:
\begin{align}\label{eq:metrizing_metric}
{\normalfont \tilde a = e^{\frac{m}{1-m}\int^u \rho(\tilde u)\D \tilde u } a}
\end{align}
\end{prop}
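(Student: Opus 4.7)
The plan is to exploit Theorem \ref{theor:metrizability_main} to reduce local metrizability to the existence of a parallel null 1-form for a conformally rescaled metric, and then invoke Walker's normal-form theorem for such metrics. Since $b$ is closed and nowhere vanishing, the Poincar\'e lemma provides a local coordinate $u$ with $b=\D u$, and the null condition then reads $a^{uu}=0$. By Theorem \ref{theor:metrizability_main}, local metrizability is equivalent to $f_i = \partial_i\psi$ for some local function $\psi$, in which case the $f$-change $\tilde a = e^{-2m\psi}a,\ \tilde b = e^{-(1+m)\psi}b$ makes $\tilde b$ parallel with respect to $\tilde a$.

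For the forward direction, assume $F$ is locally metrizable. Since $\tilde b$ is parallel with respect to a torsion-free connection it is closed, and the identity $\D(e^{-(1+m)\psi}\D u) = -(1+m)e^{-(1+m)\psi}\D\psi\wedge \D u$ then forces $\psi=\psi(u)$; we may therefore define $\tilde u$ by $\D\tilde u = e^{-(1+m)\psi(u)}\D u$ so that $\tilde b=\D\tilde u$. Walker's theorem on pseudo-Riemannian metrics admitting a parallel null 1-form then furnishes coordinates $(\tilde u,\tilde v,\tilde x^a)$ in which $\tilde a$ assumes the Brinkmann-type form
\begin{align}
\tilde a = -2\D\tilde u\D\tilde v + \tilde H(\tilde u,\tilde x)\D\tilde u^2 + 2\tilde W_a(\tilde u,\tilde x)\D\tilde u\D\tilde x^a + \tilde h_{ab}(\tilde u,\tilde x)\D\tilde x^a\D\tilde x^b.
\end{align}
Undoing the rescaling via $a=e^{2m\psi(u)}\tilde a$ and introducing the new transverse coordinate $v = e^{(m-1)\psi(u)}\tilde v$, the differential identity $\D\tilde v = e^{-(m-1)\psi}\D v - (m-1)\psi'(u)\,v\,e^{-(m-1)\psi}\D u$ contributes a term linear in $v$ to the coefficient of $\D u^2$. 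Collecting contributions and relabeling the profile functions yields the stated form with $\rho(u) = 2(m-1)\psi'(u)$. Inverting this relation gives $-2m\psi(u) = \tfrac{m}{1-m}\int^u \rho(\tilde u)\D\tilde u$, which matches the metrization formula \eqref{eq:metrizing_metric}.

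For the converse, one begins from the stated form, defines $\psi(u) = -\tfrac{1}{2(1-m)}\int^u\rho(\tilde u)\D\tilde u$, and retraces the coordinate changes in reverse to verify that $\tilde a = e^{-2m\psi}a$ is of Brinkmann type and admits $\tilde b = \D\tilde u = e^{-(1+m)\psi}\D u$ as a parallel null 1-form. Proposition \ref{prop:ab_cc_pre} then implies that $F = \tilde\alpha^{1+m}\tilde\beta^{-m}$ is Berwald with affine connection equal to the Levi-Civita connection of $\tilde a$, yielding local metrizability by $\tilde a$. The main technical obstacle is bookkeeping the coordinate transformations carefully enough to identify the unavoidable linear-in-$v$ contribution $\rho(u)v\D u^2$ as precisely encoding the conformal factor between $a$ and its metrization; the exceptional case $m=1$ (classical Kropina) requires separate treatment, since the metrization formula degenerates there, corresponding to the forced condition $\rho\equiv 0$.
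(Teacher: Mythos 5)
Your proof is correct and follows essentially the same route as the paper's: the paper derives Prop.~\ref{prop:main_previous} from Prop.~\ref{prop:null_metrization_classification} (the general null classification, itself obtained from Theorem~\ref{theor:metrizability_main} plus the Walker/Brinkmann normal form for a parallel null 1-form), and then performs exactly the coordinate change $\bar v = e^{(m-1)\psi}\tilde v$ that you carry out, with the same identification $\rho\propto(m-1)\psi'$ and the same recovery of the conformal factor in \eqref{eq:metrizing_metric}; the only cosmetic difference is that you keep the original $u$ coordinate while the paper also rescales $u$, which shifts a Jacobian factor $e^{-(1+m)\psi}$ into its expression for $\rho$. Your observation that $m=1$ degenerates the metrization formula (forcing $\rho\equiv 0$) is a legitimate caveat that the paper's proof does not address.
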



\subsubsection{$b$ is null but not necessarily closed}
\noindent Finally, if $b$ is null but not necessarily closed then the local metrizability of $F$ can be characterized in a way similar to that of Prop. \ref{prop:main_previous}, as we will show next.

\begin{prop}\label{prop:null_metrization_classification}
If $|b|^2=0$ then $F$ is locally metrizable if and only if around each point in $M$ there exist coordinates $(u, v, x^3, \dots, x^n)$ such that
\begin{align}
    a &= e^{2m\psi}\left(-2\D u\D v + H\D u^2 + 2W_a\D u\D  x^a + h_{ab}\D x^a \D x^b\right), \label{eq:pra}\\ 
    b &= e^{(1+m)\psi}\D u, \label{eq:prb}
\end{align}
where $\psi$ is any \bluetext{locally defined} smooth function on $M$, $a,b = 3,\dots n$ and $H, W_a, h_{ab}$ are \bluetext{local} smooth functions depending only on $u$ and $x^a$. 

In this case, $F$ can be written as
\begin{align}\label{eq:standard_pp_wave_m_Kropina}
    F = \left|-2\D u\D v + H\D u^2 + 2W_a\D u\D  x^a + h_{ab}\D x^a \D x^b\right|^{(1+m)/2}\left(\D u\right)^{-m}
\end{align}
and the metric $-2\D u\D v + H\D u^2 + 2W_a\D u\D  x^a + h_{ab}\D x^a \D x^b$ provides a local metrization of $F$.
\end{prop}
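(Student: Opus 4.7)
The overall strategy is to reduce the problem to Theorem~\ref{theor:metrizability_main} combined with Proposition~\ref{prop:main_previous}, transferring the question into one about parallel null 1-forms on a pseudo-Riemannian manifold.

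For the forward direction, I would invoke Theorem~\ref{theor:metrizability_main}: local metrizability furnishes a local potential $\psi$ with $f_i = \partial_i\psi$, and the conformally rescaled data $\tilde a_{ij} = e^{-2m\psi}a_{ij}$, $\tilde b_i = e^{-(1+m)\psi}b_i$ satisfy $F = \tilde\alpha^{1+m}\tilde\beta^{-m}$ with $\tilde\beta$ being $\tilde\alpha$-parallel. The conformal rescaling relates the norms by $|\tilde b|^2_{\tilde a} = e^{-2\psi}|b|^2$, so $|b|^2 = 0$ forces $|\tilde b|^2_{\tilde a} = 0$ and $\tilde b$ is a parallel null 1-form, in particular closed. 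We are then squarely in the setting of Proposition~\ref{prop:main_previous}, which supplies coordinates $(u,v,x^a)$ in which $\tilde b = \D u$ and
\begin{align*}
\tilde a = -2\D u\D v + [\tilde H(u,x) + \rho(u)v]\D u^2 + 2W_a(u,x)\D u\D x^a + h_{ab}(u,x)\D x^a\D x^b.
\end{align*}

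What remains is to upgrade from closedness of $\tilde b$ to its full parallelism. A direct computation of the Christoffel symbols of $\tilde a$ shows that $\tilde\Gamma^u_{ij}$ vanishes for all $(i,j)$ except $\tilde\Gamma^u_{uu} = \tfrac{1}{2}\rho(u)$, since the only component of $\tilde a$ with a nonzero $v$-derivative is $\tilde a_{uu}$. Because $\tilde b_u = 1$ and $\tilde b_v = \tilde b_a = 0$, one has $\tilde\nabla_i\tilde b_j = -\tilde\Gamma^u_{ij}$, which vanishes identically precisely when $\rho \equiv 0$. Dropping the $\rho(u)v$ term and inverting the rescaling via $a = e^{2m\psi}\tilde a$, $b = e^{(1+m)\psi}\tilde b$ then recovers the coordinate forms~\eqref{eq:pra}-\eqref{eq:prb}.

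The converse is more direct: given \eqref{eq:pra}-\eqref{eq:prb}, set $\hat a = -2\D u\D v + H\D u^2 + 2W_a\D u\D x^a + h_{ab}\D x^a\D x^b$ and $\hat b = \D u$. The same pp-wave-type Christoffel computation shows $\hat\Gamma^u_{ij} = 0$ outright, so $\hat b$ is $\hat a$-parallel. A short verification that the conformal factors cancel gives $F = \hat\alpha^{1+m}\hat\beta^{-m}$, and Proposition~\ref{prop:ab_cc_pre} then identifies the affine connection of $F$ with the Levi-Civita connection of $\hat a$, yielding the local metrization and the explicit expression~\eqref{eq:standard_pp_wave_m_Kropina}. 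I expect the main obstacle to be mostly bookkeeping---tracking the conformal factor through the coordinate change and the transformed Christoffel symbols---since the hard geometric step, namely that a parallel null 1-form produces pp-wave (Brinkmann-type) coordinates, is already packaged in Proposition~\ref{prop:main_previous}.
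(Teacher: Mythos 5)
Your proof is correct, and the converse direction is essentially identical to the paper's (parallelism of $\hat b=\D u$ because nothing depends on $v$, cancellation of conformal factors, then Prop.~\ref{prop:ab_cc_pre}). The forward direction, however, takes a genuinely different route. The paper, after producing $(\tilde a,\tilde b)$ with $\tilde b$ parallel and null via Theorem~\ref{theor:metrizability_main}, simply invokes the classical fact that a pseudo-Riemannian metric with a nowhere-vanishing parallel null 1-form admits Walker/Brinkmann coordinates in which $\tilde a$ has the $v$-independent pp-wave form and $\tilde b=\D u$. You instead reach those coordinates by applying Prop.~\ref{prop:main_previous} to the pair $(\tilde a,\tilde b)$ (legitimate, since $\tilde b$ is closed and null and $F$ is locally metrizable), and then upgrade from closedness to parallelism by computing $\tilde\Gamma^u_{ij}=\tfrac12\partial_v\tilde a_{ij}$, which correctly isolates $\tilde\Gamma^u_{uu}=\tfrac12\rho(u)$ and forces $\rho\equiv 0$. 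Both steps check out. Your route has the advantage of being self-contained relative to results already stated in the paper, at the cost of an extra coordinate computation; the paper's route is shorter but leans on an external theorem. One structural caveat: the paper later presents a derivation of Prop.~\ref{prop:main_previous} \emph{from} Prop.~\ref{prop:null_metrization_classification} as an ``alternative proof''; if your argument were adopted as the official proof, that derivation would become circular, so your version only stands if Prop.~\ref{prop:main_previous} is taken as an independently established result from the earlier paper (which is how it is presented).
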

\begin{proof}
We first prove the `only if' part. By Theorem \ref{theor:metrizability_main}, if $F$ is locally metrizable then there exist $\tilde a_{ij} = e^{-2m\psi}a_{ij}$ and $\tilde b_i = e^{-(1+m)\psi}b_i$ with corresponding $\tilde \alpha$ and $\tilde \beta$ such that $F = \tilde\alpha^{1+m}\tilde\beta^{-m}$ and such that $\tilde \beta$ is parallel with respect to $\tilde a_{ij}$. It is a well-known result that any pseudo-Riemannian metric $\tilde a_{ij}$ admitting a nowhere vanishing parallel null 1-form $\tilde b$, admits coordinates $(u,v,x^3,\dots x^n)$ around any point such that 
\begin{align}
    \tilde a &= -2\D u\D v + H\D u^2 + 2W_a\D u\D  x^a + h_{ab}\D x^a \D x^b, \\
    \tilde b &= \D u,
\end{align}
where the functions $H, W_a, h_{ab}$ depend only on $u$ and $x^a$, and where $a = 3,4,\dots n$. It is immediately verified that $a,b,$ and $F$ now attain the desired form, completing the `only if' direction of the proof. 

Conversely, if $a$ and $b$ have the stated form, then $F$ is given by \eqref{eq:standard_pp_wave_m_Kropina} in terms of $\tilde a$ and $\tilde b$, and since $\tilde b=\D u$ is parallel with respect to $\tilde \alpha$, we must have $f_k=0$ in the Berwald condition \eqref{eq:C-mtr_mKrop_betrald_cond}. It thus follows from \eqref{eq:delta_gamma_General} that $\tilde \alpha$ provides a local metrization of $F$, completing the proof.
\end{proof}
To end this section, we note that as a special case of Prop. \ref{prop:null_metrization_classification} we recover Prop.  \ref{prop:main_previous}, thus providing an alternative proof of the result.

\begin{proof}[Proof of Prop. \ref{prop:main_previous} from Prop. \ref{prop:null_metrization_classification}]
    Suppose first that $F$ is locally metrizable, so that $a$ and $b$ are given by \eqref{eq:pra} and \eqref{eq:prb}, where the metric functions do not depend on the coordinate $v$. The fact that the 1-form $b = e^{(1+m)\psi}\D u$ is closed implies that $\psi = \psi(u)$. In this case, it can be verified that the coordinate transformation given by $\bar u = \int^u e^{(1+m)\psi(u)}\D u$ and $\bar v = e^{(m-1)\psi(u)}v$ brings the 1-form into the form $b = \D\bar u$ (as desired) and the metric into the form
    \begin{align}
        a = -2\D \bar u\D \bar v + \bar H\D \bar u^2 + 2\bar W_a\D \bar u\D  x^a + \bar h_{ab}\D x^a \D x^b,
    \end{align}
    where
    \begin{align}
        \bar H = e^{-2\psi}\left[H + 2(m-1) \psi'v\right],\quad \bar W_a =  e^{(m-1)\psi} W_a, \quad \bar h_{ab} &= e^{2m\psi} h_{ab}.
    \end{align}
    Identifying $\tilde H(\bar u,x^a) = e^{-2\psi(u(\bar u))}H$ and $\rho(\bar u) = 2e^{-(1+m)\psi(u(\bar u))}(m-1)\psi'(u(\bar u))$ it is easily verified that $a$ also attains the desired form \eqref{eq:form_of_a} in the new coordinates. This proves the implication to the right.
    
    Conversely, if $a$ and $b$ have the stated form, then we may define $\psi(u)$ such that $\rho(u) = 2e^{-(1+m)\psi}(m-1)\psi' = 2(m-1)d\psi/d\bar u$, by setting $\psi(u) = \tfrac{1}{2(m-1)}\int^{\bar u(u)}\rho(u)\D u$, and then we can simply perform the same coordinate transformation backward. It is clear that this results in a metric and 1 form of the form \eqref{eq:pra} and \eqref{eq:prb} and hence, by Prop. \ref{prop:null_metrization_classification}, $F$ is locally metrizable, proving the implication to the left. Furthermore, the local metrization of Prop. \ref{prop:null_metrization_classification} given by $-2\D u\D v + H\D u^2 + 2W_a\D u\D  x^a + h_{ab}\D x^a \D x^b$ turns under this transformation into \eqref{eq:metrizing_metric}, showing that \eqref{eq:metrizing_metric} provides a local metrization for $F$, completing the proof.
\end{proof}

\subsection{Global metrizability}\label{sec:global_metriz}

\noindent Note that in the proof of Theorem \ref{theor:metrizability_main} we made use of the fact that $\D f=0$ implies that $f$ is a locally exact 1-form. Here $f$ is the 1-form appearing in the Berwald condition \eqref{eq:C-mtr_mKrop_betrald_cond}. In the special case where $f$ is (globally) exact, the proof guarantees that the metric $\tilde a$ providing the metrization is in fact globally defined, implying that the Finsler space under consideration is globally metrizable. Hence we have the following.

\begin{prop}\label{prop:globmetr}
If $f$ is an exact 1-form then $F$ is (globally) metrizable.
\end{prop}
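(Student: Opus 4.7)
The strategy is to retrace the proof of the implication (iii)$\Rightarrow$(i) in Theorem \ref{theor:metrizability_main} and observe that every construction done there extends from local to global under the stronger hypothesis that $f$ is exact on all of $M$. The only place where locality entered the earlier argument was the step of passing from the closed 1-form $f$ to a primitive $\psi$ with $f_i = \partial_i \psi$: the Poincar\'e lemma gives this primitive only in a neighborhood of each point. When $f$ is globally exact, such a $\psi \in C^\infty(M)$ exists globally by assumption, and this is the single input that needs to change.

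Concretely, I would proceed as follows. First, assuming $f = \D\psi$ for some globally defined $\psi \in C^\infty(M)$, define $\tilde a_{ij} = e^{-2m\psi}a_{ij}$ and $\tilde b_i = e^{-(1+m)\psi}b_i$ on all of $M$; both are smooth global tensor fields since $\psi$ is global and the exponential factors are positive. Set $\tilde\alpha$ and $\tilde\beta$ accordingly and note that the identity $F = \tilde\alpha^{1+m}\tilde\beta^{-m}$ from the proof of Theorem \ref{theor:metrizability_main} is a purely algebraic rewriting, so it holds globally wherever $F$ is defined.

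Second, the computation carried out in the proof of Theorem \ref{theor:metrizability_main} shows that
\begin{align}
\tilde\nabla_j \tilde b_i = e^{-(1+m)\psi}\left[\mathring\nabla_j b_i - \bigl(m(f_k b^k)a_{ij} + b_i f_j - m f_i b_j\bigr)\right] = 0,
\end{align}
where $\tilde\nabla$ is the Levi-Civita connection of $\tilde a_{ij}$ and the bracketed expression vanishes by the Berwald condition \eqref{eq:C-mtr_mKrop_betrald_cond}. This is a tensorial identity, so it now holds at every point of $M$, not merely on a chart. Hence $\tilde\beta$ is globally parallel with respect to $\tilde a$. Applying Prop. \ref{prop:ab_cc_pre} to the $(\tilde\alpha,\tilde\beta)$-presentation $F = \tilde\alpha^{1+m}\tilde\beta^{-m}$ then shows that the canonical affine connection of $F$ coincides with the Levi-Civita connection of $\tilde a_{ij}$ on all of $M$, which is precisely the statement that $F$ is globally metrizable, with $\tilde a$ furnishing the pseudo-Riemannian metrization.

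There is no substantive obstacle here: the proof is essentially a one-line upgrade of the earlier local argument, made possible because the assumption of global exactness of $f$ is exactly the cohomological obstruction that distinguishes closedness from exactness, i.e. the first de Rham class $[f] \in H^1_{\mathrm{dR}}(M)$. If anything merits care, it is simply noting that $\tilde a_{ij}$ has the same signature as $a_{ij}$ everywhere (since $e^{-2m\psi} > 0$), so it is a bona fide pseudo-Riemannian metric on all of $M$.
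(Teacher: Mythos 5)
Your proposal is correct and follows essentially the same route as the paper: the paper also observes that the only local ingredient in the proof of Theorem \ref{theor:metrizability_main} is the Poincar\'e-lemma step producing a primitive $\psi$ of $f$, and that under global exactness the metrization $\tilde a = e^{-2m\psi}a$ is globally defined. Nothing further is needed.
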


\noindent In general, not every closed 1-form is (globally) exact, and hence the proof of the theorem does not tell us whether local metrizability is equivalent to global metrizability. \bluetext{In certain specific cases we can say more, however. First, in the case that the 1-form $\beta$ is nowhere null, we can use Prop. \ref{prop:globmetr} to strengthen Cor. \ref{prop:non-nullmetrizable} and Cor. \ref{prop:posdefmetrizable}.}

\begin{cor}\label{prop:non-nullmetrizable_global}
    \bluetext{An $m$-Kropina space with $\beta$ nowhere null is (globally) metrizable.}
\end{cor}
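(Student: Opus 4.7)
The plan is to leverage Proposition \ref{prop:globmetr} (global exactness of $f$ implies global metrizability) together with the explicit formula \eqref{eq:f_in_terms_of_b} already derived in the non-null case. The key observation is that Corollary \ref{prop:non-nullmetrizable} was proved by first establishing the \emph{pointwise} identity \eqref{eq:f_in_terms_of_b}, namely $f_j = \partial_j\ln\sqrt{||b|^2|}$, obtained by contracting the Berwald condition \eqref{eq:C-mtr_mKrop_betrald_cond} with $b^i$. This identity does more than show $f$ is closed: it actually exhibits $f$ as the exterior derivative of a globally defined smooth function.

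First I would note that since $\beta$ is nowhere null by assumption, $|b|^2 = a_{ij}b^i b^j$ is a nowhere vanishing smooth function on $M$. Consequently $\ln\sqrt{||b|^2|}$ is a well-defined smooth function on all of $M$ (not merely locally). Then I would invoke \eqref{eq:f_in_terms_of_b}, which in a coordinate-free form reads
\begin{align}
f = \D\!\left(\ln\sqrt{||b|^2|}\,\right),
\end{align}
so $f$ is a globally exact 1-form on $M$.

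With global exactness of $f$ established, the conclusion follows immediately from Proposition \ref{prop:globmetr}: setting $\psi = \ln\sqrt{||b|^2|}$ globally and defining $\tilde a_{ij} = e^{-2m\psi} a_{ij}$ and $\tilde b_i = e^{-(1+m)\psi}b_i$ as in the proof of Theorem \ref{theor:metrizability_main} produces a globally defined pseudo-Riemannian metric $\tilde a$ whose Levi-Civita connection coincides with the affine connection of $F$.

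I do not anticipate any real obstacle here, since all ingredients are already in place; the only subtlety is the mild one of observing that the function $\ln\sqrt{||b|^2|}$ is defined globally precisely because $|b|^2$ is assumed nowhere zero (so no branch or sign issues arise). The analogous statement for Corollary \ref{prop:posdefmetrizable} (positive definite $\alpha$) then follows a fortiori, since in that case $|b|^2 > 0$ everywhere automatically.
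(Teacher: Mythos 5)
Your proof is correct and follows exactly the paper's argument: since $|b|^2$ is nowhere vanishing, \eqref{eq:f_in_terms_of_b} exhibits $f$ as the differential of the globally defined function $\ln\sqrt{||b|^2|}$, so $f$ is globally exact and Proposition \ref{prop:globmetr} applies. No gaps.
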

\begin{proof}
    \bluetext{Since $||b|^2|$ is a globally defined function, it follows from \eqref{eq:f_in_terms_of_b} that in this case, $f$ is globally exact. Hence the result follows from Prop. \ref{prop:globmetr}.}
\end{proof}
\begin{cor}\label{prop:posdefmetrizable_global}
    \bluetext{An $m$-Kropina space with positive definite $\alpha$ is (globally) metrizable.}
\end{cor}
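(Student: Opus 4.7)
The plan is to reduce the positive definite case to the already established Corollary \ref{prop:non-nullmetrizable_global}. The key observation is that positive definiteness of $\alpha$, together with the standing assumption that the 1-form $b$ is nowhere vanishing (noted just after equation \eqref{eq:mkrop}), automatically forces $\beta$ to be nowhere null.

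First, I would recall that $|b|^2 = a_{ij}b^ib^j$ and that $a_{ij}$ being positive definite means $a_{ij}v^iv^j>0$ for every nonzero tangent vector $v$. Applying this pointwise with $v^i = b^i(x)$, and using that $b^i(x)\neq 0$ at every point, I conclude $|b|^2 > 0$ everywhere. In particular, $\beta$ is nowhere null.

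Having established $|b|^2 \neq 0$ globally, the hypothesis of Corollary \ref{prop:non-nullmetrizable_global} is satisfied. That corollary then immediately yields that the $m$-Kropina space is globally metrizable, which is the claim. This step is entirely routine; the only thing one has to be careful about is ensuring that the pointwise argument is indeed global, but since positive definiteness of $\alpha$ and non-vanishing of $b$ are both global hypotheses, there is no obstruction here.

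There is essentially no hard part in this proof; it is a direct corollary whose only content is the observation that in Riemannian (as opposed to pseudo-Riemannian) signature, a nowhere vanishing 1-form automatically has nonzero norm squared. The work was already done in proving Corollary \ref{prop:non-nullmetrizable_global} via Proposition \ref{prop:globmetr} and the explicit expression \eqref{eq:f_in_terms_of_b} for $f$ as the exterior derivative of the globally defined smooth function $\ln\sqrt{||b|^2|}$.
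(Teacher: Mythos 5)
Your proof is correct and matches the paper's (implicit) argument exactly: positive definiteness of $a_{ij}$ together with the nowhere-vanishing of $b$ gives $|b|^2>0$ everywhere, so the statement reduces immediately to Corollary \ref{prop:non-nullmetrizable_global}. The paper offers no separate proof for this corollary precisely because this reduction is the intended one.
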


\noindent \bluetext{These results are independent of the topology of the underlying manifold but do require a nontrivial assumption about $\beta$. The following theorem shows that if, alternatively, we know more about the topology, we can also obtain some interesting results even when $\beta$ is not necessarily nowhere vanishing.}

\begin{theor}\label{theor:metrizability_global}
Suppose the first de Rham cohomology group of $M$ is trivial. Then the following are equivalent:
\begin{enumerate}[(i)]
    \item $F$ is metrizable;
    \item $F$ is locally metrizable;
    \item The affine Ricci tensor is symmetric;
    \item $f_i$ is a closed 1-form.
\end{enumerate} 
In that case, writing \bluetext{$f =\text{d}\psi$, a global metrization of $F$ is given by $\tilde a = e^{-2m\psi}a$ and moreover, $\tilde b = e^{-(1+m)\psi}b$ is parallel with respect to $\tilde a$.}
\end{theor}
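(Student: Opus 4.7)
The plan is to reduce the global statement to the local one (Theorem \ref{theor:metrizability_main}) together with Prop. \ref{prop:globmetr}, using the topological hypothesis only to upgrade closedness of $f$ to global exactness. The equivalence of (ii), (iii), and (iv) is precisely the content of Theorem \ref{theor:metrizability_main}, so no extra work is needed there. The implication (i)$\Rightarrow$(ii) is immediate, since a global metrization restricts to a local one in any chart. Hence the only nontrivial task is to show (iv)$\Rightarrow$(i), and simultaneously to identify the explicit global metrization $\tilde a = e^{-2m\psi}a$.

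First I would observe that if $f$ is a closed 1-form on $M$ and $H^1_{\mathrm{dR}}(M) = 0$, then by the very definition of de Rham cohomology $f$ is globally exact, so there exists $\psi\in C^\infty(M)$, defined on all of $M$, with $f = \D\psi$. This is the only place the topological assumption enters.

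Given such a globally defined $\psi$, I would then invoke Prop.\ \ref{prop:globmetr}, which asserts that exactness of $f$ already implies global metrizability. For the explicit metrization, I would rerun the construction from the proof of Theorem \ref{theor:metrizability_main}, now noting that because $\psi$ is defined on all of $M$, the quantities
\begin{align}
\tilde a_{ij} = e^{-2m\psi}a_{ij},\qquad \tilde b_i = e^{-(1+m)\psi}b_i
\end{align}
are globally defined tensor fields on $M$. The same pointwise computation as in Theorem \ref{theor:metrizability_main} gives the relation
\begin{align}
\tilde\nabla_j\tilde b_i = e^{-(1+m)\psi}\bigl[\mathring\nabla_j b_i - m(f_kb^k)a_{ij} - b_if_j + mf_ib_j\bigr] = 0
\end{align}
by the Berwald condition \eqref{eq:C-mtr_mKrop_betrald_cond}, so $\tilde b$ is $\tilde a$-parallel globally. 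Since $F = \tilde\alpha^{1+m}\tilde\beta^{-m}$ globally, Prop.\ \ref{prop:ab_cc_pre} then identifies the affine connection of $F$ with the Levi-Civita connection of $\tilde a$ on all of $M$, establishing (i) and the explicit form of the metrization.

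Since this construction is essentially a global repetition of the local argument already completed in Theorem \ref{theor:metrizability_main}, there is no genuine obstacle: the only delicate point is the passage from closed to exact, which is handled precisely by the hypothesis $H^1_{\mathrm{dR}}(M) = 0$. The chain of implications (i)$\Rightarrow$(ii)$\Leftrightarrow$(iii)$\Leftrightarrow$(iv)$\Rightarrow$(i) then closes the equivalence.
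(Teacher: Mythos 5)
Your proposal is correct and follows essentially the same route as the paper: use the trivial de Rham cohomology to upgrade the closed $f$ to a globally exact $f = \D\psi$, then observe that the construction in the proof of Theorem \ref{theor:metrizability_main} (equivalently, Prop.\ \ref{prop:globmetr}) now produces a globally defined metrization $\tilde a = e^{-2m\psi}a$ with $\tilde b$ parallel. The only difference is that you spell out the parallelism computation and the appeal to Prop.\ \ref{prop:ab_cc_pre} explicitly, whereas the paper simply cites the earlier proof; the content is identical.
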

\begin{proof}
    We only need to prove the equivalence of global metrizability with the other three (already equivalent) statements. Since global metrizibility trivially implies local metrizability, it suffices to show that $f$ being closed implies that $F$ is metrizable. So suppose that $f$ is closed. \bluetext{Then since the first de Rham cohomology group is trivial, $f$ is exact, $f = \D\psi$, with $\psi$ defined globally. The proof of Theorem \ref{theor:metrizability_main} shows that $\tilde a = e^{-2m\psi}a$ provides a global metrization. Hence $F$ is globally metrizable.}
\end{proof} 

Since any simply connected manifold has a trivial first de Rham cohomology group, we have, in particular, the following result.

\begin{cor}
    If $M$ is simply connected then (i)-(iv) are equivalent.
\end{cor}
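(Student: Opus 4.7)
The plan is to reduce the corollary immediately to Theorem \ref{theor:metrizability_global}, whose sole topological hypothesis is the vanishing of the first de Rham cohomology group of $M$. Since simple connectedness of a smooth manifold implies $H^1_{dR}(M)=0$, the corollary follows with essentially no new mathematical content beyond citing this standard topological fact.

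First I would record the topological input: on a simply connected smooth manifold, every closed 1-form is exact. The cleanest justification proceeds via de Rham's theorem $H^1_{dR}(M)\cong H^1(M;\R)$, combined with the universal coefficient theorem and the Hurewicz isomorphism $H_1(M;\Z)\cong \pi_1(M)^{\mathrm{ab}}=0$, which together force $H^1_{dR}(M)=0$. A more self-contained alternative is to prove directly that a closed 1-form $\omega$ admits a global primitive: fix a basepoint $p_0\in M$, set $\psi(p)=\int_\gamma \omega$ along any smooth path $\gamma$ from $p_0$ to $p$, and note that any two such paths are smoothly homotopic (by simple connectedness and a standard smoothing argument), so Stokes' theorem applied to the homotopy---viewed as a smooth map of the square $[0,1]^2$ into $M$, with $\D\omega=0$ on the interior and vanishing contributions from the endpoint edges---shows the integral is path-independent. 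This yields $\psi\in C^\infty(M)$ with $\D\psi=\omega$.

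Second, I would simply invoke Theorem \ref{theor:metrizability_global}: with $H^1_{dR}(M)$ trivial, the equivalence of (i)-(iv) established there applies verbatim. There is no substantive obstacle, since all the genuine work---the passage from symmetry of the affine Ricci tensor to closedness of $f$ via Lemma \ref{lem:ricci_general}, and then from a globally defined primitive $\psi$ to the global metrization $\tilde a=e^{-2m\psi}a$---was already carried out in the proofs of Theorem \ref{theor:metrizability_main} and Theorem \ref{theor:metrizability_global}. The only thing the corollary contributes is packaging the topological hypothesis into the more familiar and often more easily verifiable condition of simple connectedness.
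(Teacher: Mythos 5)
Your proposal is correct and matches the paper's argument exactly: the corollary is obtained by noting that simple connectedness forces the first de Rham cohomology group to vanish and then invoking Theorem \ref{theor:metrizability_global}. The extra justification you give for that standard topological fact is fine but not needed beyond a citation.
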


\noindent The remaining question is the following one:\\

\textbf{Q:} \textit{Do locally but not globally metrizable $m$-Kropina spaces exist at all?}\\

\noindent This is still an open question and we defer its analysis to future work, even though it seems likely that the answer should be affirmative. It is clear, at least, that such a space would need to have a 1-form $f$ appearing in the Berwald condition that is \textbf{closed but not exact}. For if it were not closed, the space would not be locally metrizable and hence not metrizable; and if it were exact, then it would be globally metrizable. A straightforward method for trying to produce a counterexample is thus given as follows. 
\begin{enumerate}
    \item \bluetext{Choose your favorite manifold $M$ with nontrivial first de Rham cohomology group.}
    \item Pick a 1-form $f$ that is closed but not exact.
    \item Pick a pseudo-Riemannian metric $\alpha$.
    \item Try to solve the Berwald condition for $\beta$.
\end{enumerate}
An obvious first guess in dimension 3 would be to let $M = \R^2\times S^1$ with the metric $\D s^2 = \D x^2+\D y^2+\D\theta^2$ and $f = \D\theta$, where $(x,y)$ are the standard coordinates on $\R^2$ and $\theta$ is the standard angle coordinate on the circle $S^1$. The 1-form $f = \D\theta$ is the prime example of a closed but not exact 1-form, of course. In this case, which is relatively computable, the system defined by the Berwald condition turns out not to be solvable, however. That is, there is no 1-form $\beta$ that leads to the prescribed $f = \D\theta$. The same is true if one instead uses the metric $\D s^2 = \D x^2 +\D y^2+y^2\D\theta^2$ \bluetext{on $\R^2\times S^1$}, which corresponds to the standard flat metric $\D s^2 = \D x^2 + \D \tilde y^2 + \D z^2$ on $\R^3$ \bluetext{if we view $\R^2\times S^1$ as an open subset of $\R^3$ in the natural way,} where $y$ is basically the radial coordinate in the $\tilde y z$-plane. Hence, in order to produce counterexamples---if these exist (which we do expect)---one needs to consider more elaborate examples of manifolds with closed but not exact 1-forms. We leave this to future study.

\color{black}
\section{Ricci-flatness of \texorpdfstring{$m$}{m}-Kropina spaces in (1+3)D}\label{sec:classification_ricci_flat}

\noindent We now turn to Ricci-flatness. Recall that a Finsler space is said to be Ricci-flat if the Finsler-Ricci tensor vanishes, $R_{ij}=0$, or equivalently, if the Ricci scalar vanishes, $\text{Ric}=0$. We restrict ourselves in this section to the case where $M$ is four-dimensional with $a_{ij}$ having Lorentzian signature. This is the case that is most relevant to applications in gravitational physics. We'll refer to such metrics as \mbox{(1+3)D} $m$-Kropina metrics.

\subsection{Ricci-flat nowhere-null \texorpdfstring{$m$}{m}-Kropina metrics in \mbox{(1+3)D}}

\noindent First, we characterize Ricci-flatness of $m$-Kropina metrics in \mbox{(1+3)D} with \textit{nowhere-null} 1-form, $|b|^2\neq 0$.
\begin{prop}\label{prop:Ricci_flat_F}

    A \bluetext{Berwald} $(1+3)$D $m$-Kropina space with $|b|^2\neq 0$ is Ricci-flat if and only if it is locally of the form
    \begin{align}\label{eq:unique_non-null_ricci_flat_mKrop}
        F = \left|\eta_{ij}\D x^i \D x^j\right|^{(1+m)/2}\left(c_i\D x^i\right)^{-m}
    \end{align}
    with $[\eta_{ij}]=$ diag$(-1,1,1,1)$ and $c_i=$ const. 
\end{prop}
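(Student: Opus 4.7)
The plan is to reduce Ricci-flatness of $F$ to ordinary Ricci-flatness of an associated pseudo-Riemannian metric, and then exploit the rigidity imposed by dimension three together with a parallel non-null 1-form. Since $|b|^2 \neq 0$, Cor.~\ref{prop:non-nullmetrizable} guarantees local metrizability, so Theorem~\ref{theor:metrizability_main} provides local data $(\tilde\alpha,\tilde\beta)$ with $F = \tilde\alpha^{1+m}\tilde\beta^{-m}$, where $\tilde b$ is parallel with respect to the local metrization $\tilde a$. A short computation gives $|\tilde b|^2 = e^{-2\psi}|b|^2 \neq 0$, so $\tilde b$ remains nowhere null. Because $\tilde a$ metrizes the affine connection, Lemma~\ref{lem:RicciTensors}, together with symmetry of the Levi-Civita Ricci tensor, tells us that Ricci-flatness of $F$ is equivalent to Ricci-flatness of $\tilde a$ as a pseudo-Riemannian metric.

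Next I would use the existence of the parallel, nowhere-null 1-form $\tilde b$ on $(M,\tilde a)$ to locally split the metric. Since $\tilde\nabla\tilde b = 0$, the norm $|\tilde b|^2$ is a nonzero constant $\epsilon$, the 1-form $\tilde b$ is closed (so locally $\tilde b = \D\tau$), and the dual vector field $\tilde b^\sharp$ is parallel (hence Killing). Taking a hypersurface $\{\tau=0\}$ with coordinates $(x^a)$, $a=1,2,3$, and propagating by the flow of $\tilde b^\sharp/\epsilon$, one obtains adapted coordinates in which
\begin{align}
    \tilde a = \epsilon\,\D\tau^2 + h_{ab}(x)\D x^a\D x^b, \qquad \tilde b = \D\tau,
\end{align}
with the 3-metric $h$ independent of $\tau$. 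In such a product the Ricci tensor of $\tilde a$ reduces to that of $h$, so $\tilde a$ is Ricci-flat if and only if $h$ is.

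The decisive step is then dimensional: $h$ is a three-dimensional pseudo-Riemannian metric (Riemannian if $\tilde b$ is timelike, Lorentzian if spacelike), and in three dimensions the Weyl tensor vanishes identically, so the Riemann tensor is a pointwise algebraic function of the Ricci tensor and scalar curvature. Hence $\mathrm{Ric}(h)=0$ forces $h$ to be flat, so $\tilde a$ itself is flat. In Cartesian coordinates $\tilde a = \eta_{ij}\D x^i\D x^j$, and the parallel 1-form $\tilde b$ has constant components $c_i$, yielding the stated normal form~\eqref{eq:unique_non-null_ricci_flat_mKrop}. The converse direction is immediate: for $F$ of that form, Prop.~\ref{prop:ab_cc_pre} identifies the affine connection of $F$ with the flat Levi-Civita connection of $\eta$, so every curvature tensor, in particular the Finsler--Ricci tensor, vanishes.

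The main obstacle I anticipate is the product-decomposition step: one has to confirm that the leaves orthogonal to $\tilde b^\sharp$ carry a $\tau$-independent metric. The cleanest justification is that $\tilde b^\sharp$, being parallel, is Killing, so its flow is an isometry pushing leaves to isometric leaves; combined with integrability of the orthogonal distribution (automatic since $\tilde b$ is closed and non-null), this gives the required product form. Everything else is standard: the metrization reduction from Section~\ref{sec:metrizability} and the three-dimensional Ricci-determines-Riemann identity.
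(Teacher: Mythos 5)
Your argument is correct and follows the same skeleton as the paper's proof: use $|b|^2\neq 0$ to get metrizability, pass to the data $(\tilde\alpha,\tilde\beta)$ of Theorem \ref{theor:metrizability_main} so that Ricci-flatness of $F$ becomes Ricci-flatness of the pseudo-Riemannian metric $\tilde a$, and then use the parallel non-null 1-form $\tilde b$ to force $\tilde a$ to be flat. The only real difference is in that last rigidity step: the paper simply invokes the known result (Lemma \ref{lem:EFE_sol_with_parallel_field}) that a Ricci-flat Lorentzian metric admitting a non-null parallel 1-form is flat, whereas you prove it inline via the local product splitting $\tilde a = \epsilon\,\D\tau^2 + h_{ab}(x)\D x^a\D x^b$ (valid because the orthogonal distribution of a parallel non-null 1-form is parallel and nondegenerate) followed by the three-dimensional identity that vanishing Ricci forces vanishing Riemann since the Weyl tensor is identically zero in dimension three. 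That inline argument is sound, and it correctly covers both causal characters of $\tilde b$ (the transverse 3-metric is Riemannian or Lorentzian accordingly, and the 3D Weyl argument is signature-independent); your computation $|\tilde b|^2 = e^{-2\psi}|b|^2$ confirming that $\tilde b$ stays non-null is also right. What the paper's route buys is brevity; what yours buys is self-containedness, at the cost of having to justify the de Rham--type product decomposition, which you do adequately via the parallel (hence Killing) vector $\tilde b^\sharp$ and integrability of its orthogonal complement. The converse direction is handled identically in both proofs via Prop. \ref{prop:ab_cc_pre}.
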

\begin{proof}
We first prove the `if' direction, i.e. we assume $F$ is of the form \eqref{eq:unique_non-null_ricci_flat_mKrop} and prove that $F$ is Ricci-flat. The 1-form $c_i\D x^i$ is parallel with respect to $\eta_{ij}$, and hence it follows by Prop. \ref{prop:ab_cc_pre} that $F$ has the same affine connection as $\eta$, which is Ricci-flat. Hence the  Ricci tensor constructed from the affine connection of $F$---the affine Ricci tensor---vanishes as well, i.e. $F$ is affinely Ricci-flat. This implies, in particular, that $F$ is Ricci-flat, by Lemma \ref{lem:RicciTensors}. 

Conversely, to prove the `only if' direction, suppose that $F$ is Ricci-flat. \bluetext{Note also that $F$ is metrizable, by Cor. \ref{prop:non-nullmetrizable_global}.} Hence, defining $\tilde \alpha,\tilde \beta$ as in Theorem \ref{theor:metrizability_main}, we may write $F = \tilde\alpha^{1+m}\tilde\beta^{-m}$ and in this case $F$ has the same affine connection as $\tilde \alpha$, so since $F$ is Ricci-flat, $\tilde \alpha$ is Ricci-flat. But by Theorem \ref{theor:metrizability_main}, $\tilde \alpha$ admits a non-null parallel 1-form, namely $\tilde b$, so if $\tilde \alpha$ is Ricci-flat it must actually be flat, by Lemma \ref{lem:EFE_sol_with_parallel_field} below. Hence, in suitable coordinates, it can be written as $\tilde a = \eta_{ij}\D x^i \D x^j$. Since $\tilde \beta$ is parallel with respect to this metric, it must have constant coefficients in these coordinates, and \eqref{eq:unique_non-null_ricci_flat_mKrop} follows, completing the proof.
\end{proof}

In the proof we have made use of the following well-known result; see e.g. (Heefer, 2024)\cite{HeeferPhdThesis} for a self-contained proof.
\begin{lem}\label{lem:EFE_sol_with_parallel_field}
    Any Ricci-flat Lorentzian metric admitting a non-null parallel vector field or 1-form is flat.
\end{lem}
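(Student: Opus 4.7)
The plan is to reduce the problem to a three-dimensional one via a local product decomposition. Let $V$ denote the parallel vector field (equivalently, the metric dual of the parallel 1-form—these cases coincide since $\nabla$ commutes with raising and lowering indices). First I would check that $g(V,V)$ is a nonzero constant: parallelism gives $\partial_X g(V,V) = 2 g(\nabla_X V, V) = 0$, so the non-null hypothesis lets me rescale to arrange $g(V,V) = \epsilon \in \{-1, +1\}$. The dual 1-form $V^\flat$ is then also parallel, hence closed, so by Frobenius the $g$-orthogonal distribution $V^\perp = \ker V^\flat$ is integrable.

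Since a parallel vector field is automatically Killing, the flow of $V$ preserves both $g$ and the orthogonal foliation. A standard argument (the non-degenerate local Wu / pseudo-Riemannian de Rham splitting) then produces coordinates $(t, x^1, x^2, x^3)$ on a neighborhood such that $V = \partial_t$, $V^\flat = \epsilon\, \D t$, and
\begin{equation*}
    g = \epsilon\, \D t^2 + h_{ab}(x)\, \D x^a \D x^b,
\end{equation*}
where $h$ is a non-degenerate metric on the three-dimensional transverse slice, independent of $t$.

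For such a direct product, the Ricci tensor decomposes cleanly: all components involving $\partial_t$ vanish and the transverse components coincide with $\mathrm{Ric}(h)$. Thus Ricci-flatness of $g$ is equivalent to Ricci-flatness of $h$. Because $h$ is three-dimensional, its Weyl tensor vanishes identically, and the Riemann tensor is fully determined by the Ricci tensor and scalar curvature through the standard identity
\begin{equation*}
    R_{abcd} = h_{ac}R_{bd} - h_{ad}R_{bc} + h_{bd}R_{ac} - h_{bc}R_{ad} - \tfrac{1}{2}R\bigl(h_{ac}h_{bd} - h_{ad}h_{bc}\bigr).
\end{equation*}
Consequently $\mathrm{Ric}(h) = 0$ forces $h$ to be flat, and the product structure then yields $g$ flat.

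The main obstacle will be the splitting step: upgrading the Frobenius integrability of $V^\perp$ together with the Killing property of $V$ into the explicit coordinate form above. The two delicate points are (i) ensuring there are no cross terms $\D t\, \D x^a$, which follows from $\iota_V g = \epsilon\,\D t$, and (ii) verifying that the transverse block $h_{ab}$ has no $t$-dependence, which follows from $\mathcal L_V g = 0$. Both are standard but warrant care. Once the splitting is in hand, the reduction to a three-dimensional Ricci-flat problem and the appeal to the vanishing Weyl tensor in dimension three are essentially formal, and the conclusion follows.
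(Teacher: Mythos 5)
Your argument is correct and is the standard proof of this fact: the paper itself offers no proof of Lemma \ref{lem:EFE_sol_with_parallel_field} (it defers to a reference), and the route you take---normalize $g(V,V)=\epsilon$, use that the parallel $V^\flat$ is closed and $V$ is Killing to obtain the local product splitting $g=\epsilon\,\D t^2+h_{ab}(x)\D x^a\D x^b$, then reduce to the $3$-dimensional transverse metric where Ricci-flat implies flat---is exactly the expected one, and each step you flag as delicate does go through. One point worth making explicit: your proof uses $\dim M=4$ essentially (the transverse slice must be $3$-dimensional for the vanishing-Weyl identity to apply), which matches the paper's context since the lemma is only invoked in the $(1+3)$D section, but the statement should not be read as dimension-independent---in higher dimensions the product of a line with a non-flat Ricci-flat metric is a counterexample.
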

\noindent We remark that the \textit{representation} of $F$ in terms of a specific choice of $\alpha$ and $\beta$ is, of course, not unique (we can multiply both $\alpha$ and $\beta$ by suitably related conformal factors without changing $F$), and the proposition does not imply that $\alpha$ has to be equal to the Minkowski metric $\eta$. However, since any representative $\alpha$ is related to the $\tilde\alpha$ from the proposition by a conformal transformation (\bluetext{in the Lorentzian case this is most easily seen by looking at the zeros of $F = \tilde\alpha^{1+m}\tilde\beta^{-m} = \alpha^{1+m}\beta^{-m}$ and using the well-known result that two Lorentzian metrics with identical light cones must be conformally related}), we have the following \bluetext{corollary}.
%
%
%
\begin{cor}
    If $F$ is Berwald and Ricci-flat and $|b|^2\neq 0$ then $\alpha$ is conformally flat.
\end{cor}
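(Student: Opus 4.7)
The plan is to leverage Prop.~\ref{prop:Ricci_flat_F} to obtain a local representation of $F$ in which the underlying pseudo-Riemannian metric is manifestly the Minkowski metric, and then to deduce that any other admissible representation $F=\alpha^{1+m}\beta^{-m}$ forces $\alpha$ to be conformally related to Minkowski, via the classical theorem that two Lorentzian metrics with identical pointwise null cones differ by a conformal factor. This is essentially the strategy sketched in the remark immediately preceding the corollary; my task is to make it rigorous.

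First, I would invoke Prop.~\ref{prop:Ricci_flat_F}, which provides, in suitable local coordinates, a representation $F=\tilde\alpha^{1+m}\tilde\beta^{-m}$ with $\tilde a=\eta$ the Minkowski metric and $\tilde b=c_i\D x^i$ of constant coefficients. Since $F$ is a well-defined function on the conic subbundle $\mathcal{A}$, independent of the chosen representation, the identity
\begin{equation}
\alpha^{1+m}\beta^{-m}=\tilde\alpha^{1+m}\tilde\beta^{-m}
\end{equation}
must hold on the common domain.

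Next, I would show that the null cones of $a$ and $\tilde a$ coincide at every point $p\in M$. Pick $y\in T_pM$ with $a_{ij}y^iy^j=0$, so $\alpha(p,y)=0$. In the non-degenerate regime $1+m>0$, $F$ extends continuously to zero at such $y$, and the identity above then forces $\tilde\alpha(p,y)^{1+m}\tilde\beta(p,y)^{-m}=0$; since $\{\tilde\beta=0\}$ is a hyperplane of codimension one in $T_pM$ that intersects the null cone generically in codimension two, the conclusion $\tilde\alpha(p,y)=0$ extends by continuity to the full null cone of $a$. The symmetric argument yields the reverse inclusion, so $a$ and $\tilde a$ share the same null cone at every point.

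Finally, I would invoke the classical result that two Lorentzian metrics on $M$ whose pointwise null cones coincide must be conformally related, obtaining $a_{ij}=e^{2\phi}\eta_{ij}$ for some smooth $\phi$, whence $\alpha$ is conformally flat. The only point genuinely requiring care---hardly a substantial obstacle---is the bookkeeping around the conic domain $\mathcal{A}$ and the loci where $\beta$ or $\tilde\beta$ vanishes on the null cone, which is handled by the dimension count combined with continuity of $F$ on the closure of $\mathcal{A}$; the remainder is a direct combination of Prop.~\ref{prop:Ricci_flat_F} with the null-cone-conformal-class theorem already cited in the text.
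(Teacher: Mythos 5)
Your proof is correct and follows essentially the same route as the paper: it applies Prop.~\ref{prop:Ricci_flat_F} to obtain the flat representative $\tilde\alpha = \eta$, identifies the null cones of $a$ and $\tilde a$ via the zero set of $F$, and concludes by the standard fact that Lorentzian metrics with the same light cones are conformally related. You merely fill in the continuity/codimension bookkeeping that the paper leaves implicit in its remark preceding the corollary.
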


\noindent Prop. \ref{prop:Ricci_flat_F} shows that any Berwald, Ricci-flat $m$-Kropina metric with $|b|^2\neq 0$ must be of the form \eqref{eq:unique_non-null_ricci_flat_mKrop}. We end this section with an example of a nowhere-null $m$-Kropina space from the literature and show that it is indeed the trivial Finsler metric \eqref{eq:unique_non-null_ricci_flat_mKrop} in disguise.

\begin{ex}\label{example}
In (Fuster et al., 2018)\cite{Fuster:2018djw} the following $m$-Kropina metric was considered as a cosmological model of spacetime in the context of Finsler gravity---an extension of Einstein's general theory of relativity that allows for Finsler spacetime geometries:
\begin{align}\label{eq:metricex}
    F &= \sqrt{\left|\D t^2 - a(t)^2\left(\D x^2+\D y^2+\D z^2\right)\right|}^{1+m}\left(c\, a(t)^{1/m}\D t\right)^{-m},
\end{align}
The metric is written in terms of the coordinates $(t,x,y,z)$, $a(t)$ is an arbitrary function, and $c$ is a positive constant. We remark that the notation in the original article was somewhat different.\footnote{A slightly different framework is used in (Fuster et al., 2018)\cite{Fuster:2018djw}, where the geometry is in fact characterized by an $r$-homogeneous Lagrangian (for arbitrary $r$) rather than a Finsler metric. But the metric \eqref{eq:metricex} displayed here represents the same geometry. It is related to the Lagrangian in (Fuster et al., 2018)\cite{Fuster:2018djw} by $L = F^\frac{2}{1+m}$. Similarly, our parameter $m$ is related to the parameter $n$ in (Fuster et al., 2018)\cite{Fuster:2018djw} by $n = \frac{-2m}{1+m}$.} This metric is Ricci-flat (and hence a solution to most candidate field equations in Finsler gravity).

Since the 1-form $b = c\, a(t)^{1/m}\D t$ is nowhere null, Prop. \ref{prop:Ricci_flat_F} dictates that the metric must be of the trivial form \eqref{eq:unique_non-null_ricci_flat_mKrop} in suitable coordinates. It indeed turns out (and it is easy to check) that the coordinate transformation 
\begin{align}
\tilde t = \int^t\frac{c^{-m}}{a(t)}\D t,\qquad \tilde x = c^{-m} x, \qquad \tilde y = c^{-m} y, \qquad \tilde z = c^{-m} z,
\end{align}
brings the metric into the trivial form 
\begin{align}
    F=\sqrt{\left|\D \tilde t^2 - \D \tilde x^2-\D \tilde y^2-\D \tilde z^2\right|}^{1+m}\left(\D \tilde t\,\right)^m.
\end{align}
\end{ex}

\subsection{Ricci-flat null \texorpdfstring{$m$}{m}-Kropina metrics in \mbox{(1+3)D}}

\noindent While an $m$-Kropina space with $|b|^2\neq 0$ is always locally metrizable, as we have seen in Prop. \ref{prop:non-nullmetrizable}, this is not always the case when the 1-form is null. With regard to Ricci-flatness, it turns out that in the $|b|^2=0$ case we have to add the additional assumption that $F$ is locally metrizable in order to obtain nice results. Alternatively, one may choose to replace everywhere the two conditions \textit{locally metrizable and Ricci-flat} by the single condition \textit{affinely Ricci-flat}, the latter meaning that the \textit{affine} Ricci tensor vanishes (rather than the Finsler-Ricci tensor, which is its symmetrization, by Lemma \ref{lem:RicciTensors}). Affine Ricci-flatness is not a very common notion in the literature, though, so we will stick to locally metrizable and Ricci-flat throughout the article.
\begin{prop}
    Let $F$ be an $m$-Kropina metric of Berwald type. Then 
    \begin{align}
        \text{F is affinely Ricci-flat}\quad\Leftrightarrow\quad \text{F is locally metrizable and Ricci-flat}. \nonumber
    \end{align}
\end{prop}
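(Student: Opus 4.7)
The proposal is to prove the two implications separately, using Lemma \ref{lem:RicciTensors} and the equivalence (i)$\Leftrightarrow$(ii) in Theorem \ref{theor:metrizability_main} as the only inputs. The whole argument amounts to decomposing $\bar R_{ij}$ into its symmetric and antisymmetric parts and observing that affine Ricci-flatness kills both parts simultaneously, while the two conditions on the right each kill exactly one of the parts.

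For the forward direction ($\Rightarrow$), I would assume $\bar R_{ij}=0$. Then trivially $\bar R_{ij}=\bar R_{ji}$, so by Theorem \ref{theor:metrizability_main} (the implication (ii)$\Rightarrow$(i)) $F$ is locally metrizable. Moreover, by Lemma \ref{lem:RicciTensors}, the Finsler-Ricci tensor satisfies $R_{ij}=\tfrac{1}{2}(\bar R_{ij}+\bar R_{ji})=0$, so $F$ is Ricci-flat in the standard Finslerian sense.

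For the reverse direction ($\Leftarrow$), I would assume $F$ is locally metrizable and Ricci-flat. Local metrizability, via Theorem \ref{theor:metrizability_main} ((i)$\Rightarrow$(ii)), gives that $\bar R_{ij}$ is symmetric, i.e. the skew-symmetric part $\bar R_{[ij]}$ vanishes. Ricci-flatness $R_{ij}=0$, combined with Lemma \ref{lem:RicciTensors}, gives that the symmetric part $\bar R_{(ij)}=R_{ij}$ vanishes as well. Therefore $\bar R_{ij}=\bar R_{(ij)}+\bar R_{[ij]}=0$, which is precisely affine Ricci-flatness.

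There is no real obstacle here: the result is essentially a bookkeeping consequence of two facts already proved earlier in the paper, namely that on a Berwald $m$-Kropina space the Finsler-Ricci tensor is the symmetrization of the affine Ricci tensor (Lemma \ref{lem:RicciTensors}) and that local metrizability is equivalent to the vanishing of the antisymmetric part of the affine Ricci tensor (Theorem \ref{theor:metrizability_main}). The proof is therefore only a few lines; the conceptual point worth emphasizing in the writeup is that local metrizability and standard Ricci-flatness together cover precisely the symmetric and antisymmetric parts of $\bar R_{ij}$, so that their conjunction is equivalent to affine Ricci-flatness.
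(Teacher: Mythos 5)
Your proposal is correct and follows essentially the same route as the paper: both directions rest on Lemma \ref{lem:RicciTensors} (the Finsler-Ricci tensor is the symmetrization of the affine Ricci tensor) together with the equivalence of local metrizability and symmetry of the affine Ricci tensor from Theorem \ref{theor:metrizability_main}. Your explicit decomposition of $\bar R_{ij}$ into symmetric and antisymmetric parts is just a slightly more spelled-out phrasing of the paper's observation that a symmetric affine Ricci tensor coincides with the Finsler-Ricci tensor.
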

\begin{proof}
    If $F$ is affinely Ricci-flat then it is trivially Ricci-flat, since the Finsler-Ricci tensor is the symmetrization of the affine Ricci tensor. In that case, $F$ \bluetext{is} also locally metrizable, by Theorem \ref{theor:metrizability_main}, since the (vanishing) affine Ricci tensor is, in particular, symmetric. For the converse implication, if $F$ is locally metrizable then, by Theorem \ref{theor:metrizability_main}, its affine Ricci tensor is symmetric, and hence the affine Ricci tensor coincides with the Finsler-Ricci tensor. The vanishing of the Finsler-Ricci tensor thus implies, in this case, the vanishing of the affine Ricci tensor.
\end{proof}
To obtain the $|b|^2=0$ analog of Prop. \ref{prop:Ricci_flat_F}, characterizing Ricci-flatness, we will employ the following result proven in (Heefer et al., 2023)\cite{Heefer_2023_Finsler_grav_waves}, an alternative account of which---more in the classical \textit{definition}-\textit{theorem}-\textit{proof} style---can be found in (Heefer, 2024)\cite{HeeferPhdThesis}.
\begin{lem}\label{lem:coordchange}
    Let a Lorentzian metric $a$ and 1-form $b$ on a $4$-dimensional manifold be locally given by
    \begin{align}
        a &= -2\D u \D v + H(u,x)\, \D u^2 + 2W_a(u,x)\,\D x^a\D u +h_{ab}(u,x) \D x^a \D x^b, \label{eq:original_pp_wave_metricl} \\
        b &= \D u.\label{eq:original_1forml}
    \end{align}
    If $a$ is Ricci-flat then $a$ and $b$ can be expressed locally, in suitable coordinates, as
    \begin{align}\label{eq:pp_waves_final_form_body2s}
        a &= -2\D \bar u\D \bar v + \bar H(\bar u,\bar x,\bar y)\D \bar u^2 + \D \bar x^2 + \D \bar y^2,\\
        b &= \D \bar u,
\end{align}
such that $\bar H$ satifies $\partial^2_{\bar x} \bar  H + \partial^2_{\bar y} \bar H=0$.
\end{lem}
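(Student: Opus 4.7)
The plan is to exploit the Brinkmann-type structure of \eqref{eq:original_pp_wave_metricl} directly, by extracting information from each component of the Ricci-flatness condition in turn. The distinguishing feature is that $\partial_v$ is a parallel null vector field (since its metric dual is $-\D u$, and $\D u = b$ is parallel by hypothesis), so no metric component depends on $v$ and the inverse metric has $a^{vv}, a^{va}$ involving $H, W_a$ but $a^{va^{\prime}}, a^{uu}, a^{ua}$ of a very restricted form. A direct computation of the Christoffel symbols in coordinates $(u,v,x^3,x^4)$ then shows that the only nonvanishing Ricci components are $\bar R_{uu}$, $\bar R_{ua}$, and $\bar R_{ab}$, and I would organize the proof by analyzing these three in the order $\bar R_{ab}=0$, $\bar R_{ua}=0$, $\bar R_{uu}=0$.

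First, $\bar R_{ab}=0$: a careful computation shows that the transverse block equals the intrinsic Ricci tensor of the two-dimensional Riemannian metric $h_{ab}(u,\cdot)$ on each slice $u = \text{const}$ (contributions involving $H,W_a$ and $\partial_u h_{ab}$ drop out due to the parallel null structure). In dimension two, Ricci-flat is equivalent to flat, so $h_{ab}(u,\cdot)$ is flat for every fixed $u$. Hence in a neighborhood of each point one can choose functions $\bar x(u,x^3,x^4)$ and $\bar y(u,x^3,x^4)$, depending smoothly on the parameter $u$, such that $h_{ab}\D x^a \D x^b = \D\bar x^2 + \D\bar y^2$. Treating $(u,v,x^3,x^4)\mapsto(u,v,\bar x,\bar y)$ as a coordinate change (it preserves $\D u$ and the $\D u\D v$ block), the metric takes the intermediate form $-2\D u\D v + \tilde H\,\D u^2 + 2\tilde W_a\,\D u\D\bar x^a + \delta_{ab}\D\bar x^a\D\bar x^b$ while still $b=\D u$.

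Second, $\bar R_{ua}=0$: with the transverse block now Euclidean, this component reduces to the statement $\partial_{[\bar a}\tilde W_{\bar b]}$ is $u$-independent up to a total derivative, which is exactly the integrability condition for the existence of a smooth function $g(u,\bar x,\bar y)$ with $\partial_{\bar a}g = \tilde W_{\bar a}$. Defining $\bar v = v + g$ and $\bar u = u$, the identity $-2\D\bar u\D\bar v = -2\D u\D v - 2\partial_u g\,\D u^2 - 2\tilde W_a\,\D u\D\bar x^a$ absorbs all cross terms at the cost of redefining the $\D u^2$-coefficient, producing the form \eqref{eq:pp_waves_final_form_body2s} with a new coefficient $\bar H(\bar u,\bar x,\bar y)$ and with $b=\D u=\D\bar u$ unchanged. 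Finally, $\bar R_{uu}=0$ in these simplified coordinates reduces directly to $\partial_{\bar x}^2\bar H + \partial_{\bar y}^2\bar H=0$, giving the Laplace equation for $\bar H$.

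The main obstacle I expect is the second step: one must verify that $\bar R_{ua}=0$ really yields the closedness of the 1-form $\tilde W_a\D\bar x^a$ in the transverse variables, as opposed to some more complicated statement involving $\partial_u$ derivatives of the as-yet-only-partly-fixed transverse coordinates. There is residual gauge freedom in the choice of the flattening functions $\bar x,\bar y$ (they are unique only up to a $u$-dependent Euclidean motion), and this freedom is exactly what one should use to reduce $\bar R_{ua}=0$ to the desired integrability condition. Keeping careful track of how the various Christoffel terms transform under the successive coordinate changes — rather than attempting a single global ansatz — is the cleanest way to get through, and because the statement is local, no topological concerns arise.
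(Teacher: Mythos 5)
The paper does not prove Lemma~\ref{lem:coordchange} itself; it imports it from the cited reference (Heefer et al., 2023), where the argument is precisely the Brinkmann-coordinate computation you outline, so your route is the standard one and it is sound. Your step decomposition is correct: with $\partial_v$ null and parallel one finds $\Gamma^u_{ij}=0$, the only potentially nonzero Ricci components are $\bar R_{uu},\bar R_{ua},\bar R_{ab}$, and indeed $\bar R_{ab}$ reduces exactly to the intrinsic Ricci tensor of $h(u,\cdot)$, so two-dimensionality forces the wave surfaces to be flat and a $u$-dependent flattening diffeomorphism exists. One point in your second step is stated imprecisely, though you correctly diagnose and repair it yourself: with $h_{ab}=\delta_{ab}$ one computes $\bar R_{ub}=-\tfrac{1}{2}\delta^{ac}\partial_a(\partial_c \tilde W_b-\partial_b \tilde W_c)$, so $\bar R_{ua}=0$ gives $\partial_{[a}\tilde W_{b]}=\phi(u)\,\epsilon_{ab}$ for some function of $u$ alone --- \emph{not} closedness of $\tilde W_a\,\D \bar x^a$ on the slices. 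Closedness only follows after exploiting the residual $u$-dependent rotation $\bar x^a\mapsto R^a{}_b(u)\bar x^b$, which shifts $\partial_{[a}\tilde W_{b]}$ by $(R^T\dot R)_{[ab]}$ and can be tuned to cancel $\phi(u)$; this gauge step is essential, not optional. After that, $\tilde W_a=\partial_a g$ and the shift of $v$ by $g$ (watch the sign: one needs $\partial_a g=-\tilde W_a$ with $\bar v=v+g$, or equivalently $\bar v = v - g$ with $\partial_a g = \tilde W_a$) absorbs the cross terms, and $\bar R_{uu}=0$ then collapses to $(\partial_{\bar x}^2+\partial_{\bar y}^2)\bar H=0$ as you claim. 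With the rotation step made explicit and the sign fixed, your proposal is a complete and correct proof.
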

Below, $\delta_{ab}$ denotes the Kronecker symbol: $
\delta_{ab} = 1$ iff $a=b$ and otherwise $
\delta_{ab}=0$.
\begin{prop}\label{prop:poep}
    Any locally metrizable $m$-Kropina space in $(1+3)$D with $|b|^2=0$ is Ricci-flat if and only if it is locally of the form
\begin{align}\label{eq:nullmKroptrivialRicciFlat}
    F = \left|-2\D u\D v + H(u,x)\D u^2 + \delta_{ab}\D x^a \D x^b\right|^{(1+m)/2}\left(\D u\right)^{-m}
\end{align}
such that $H$ satifies $\delta^{ab}\partial_a \partial_b H=0$.
\end{prop}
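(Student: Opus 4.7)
The plan is to leverage Prop.~\ref{prop:null_metrization_classification} together with Lemma~\ref{lem:coordchange}, using the observation that for a locally metrizable $m$-Kropina metric the Finsler-Ricci tensor coincides with the affine Ricci tensor, which in turn is just the Ricci tensor of the metrizing pseudo-Riemannian metric.

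For the ``if'' direction, assume $F$ has the stated form \eqref{eq:nullmKroptrivialRicciFlat}. Applying Prop.~\ref{prop:null_metrization_classification} with $\psi = 0$, the pseudo-Riemannian metric $\tilde a = -2\D u\D v + H\D u^2 + \delta_{ab}\D x^a\D x^b$ provides a local metrization of $F$, so the affine connection of $F$ is the Levi-Civita connection of $\tilde a$. It is a standard calculation that this pp-wave-type metric is Ricci-flat precisely when $\delta^{ab}\partial_a\partial_b H = 0$. Hence under the stated hypothesis on $H$, $\tilde a$ is Ricci-flat, so $F$ is affinely Ricci-flat and, in particular, Ricci-flat (by Lemma~\ref{lem:RicciTensors}).

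For the ``only if'' direction, suppose $F$ is locally metrizable with $|b|^2 = 0$ and Ricci-flat. Prop.~\ref{prop:null_metrization_classification} yields local coordinates $(u,v,x^a)$ and a function $\psi$ such that, setting $\tilde a = e^{-2m\psi}a$ and $\tilde b = e^{-(1+m)\psi}b$, we have $F = \tilde\alpha^{1+m}\tilde\beta^{-m}$ with $\tilde b = \D u$ parallel with respect to $\tilde a$, and $\tilde a$ takes the pp-wave-type form with null parallel 1-form $\tilde b$. By Prop.~\ref{prop:ab_cc_pre} the affine connection of $F$ coincides with the Levi-Civita connection of $\tilde a$; by Theorem~\ref{theor:metrizability_main} the affine Ricci tensor is symmetric, hence equal to the Finsler-Ricci tensor. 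Therefore $F$ is Ricci-flat iff $\tilde a$ is. Now Lemma~\ref{lem:coordchange} applies to $(\tilde a,\tilde b)$: in suitable coordinates $(\bar u,\bar v,\bar x,\bar y)$ we may write
\begin{align*}
\tilde a = -2\D\bar u\D\bar v + \bar H(\bar u,\bar x,\bar y)\D\bar u^2 + \D\bar x^2 + \D\bar y^2,\qquad \tilde b = \D\bar u,
\end{align*}
with $\partial^2_{\bar x}\bar H + \partial^2_{\bar y}\bar H = 0$. Substituting into $F = \tilde\alpha^{1+m}\tilde\beta^{-m}$ produces exactly \eqref{eq:nullmKroptrivialRicciFlat} in the barred coordinates, completing the argument.

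The main obstacle, modest as it is, lies in organizing the two successive coordinate changes correctly: first the conformal rescaling built into Prop.~\ref{prop:null_metrization_classification} (which absorbs the nontrivial part of $\beta$ into $\D u$), then the pp-wave normalization from Lemma~\ref{lem:coordchange} (which flattens the transverse part and reduces the Ricci-flat condition to the transverse Laplace equation on $\bar H$). No new curvature computation is required; both the metrization and the Ricci-flat pp-wave classification have already been established in the cited results.
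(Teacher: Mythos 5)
Your proposal is correct and follows essentially the same route as the paper's proof: the ``if'' direction reduces to the Ricci-flatness of the pp-wave metric via the parallel 1-form $\D u$ and Prop.~\ref{prop:ab_cc_pre}, and the ``only if'' direction combines the standard form from Prop.~\ref{prop:null_metrization_classification} with the coordinate normalization of Lemma~\ref{lem:coordchange}. The only cosmetic difference is that you make explicit the identification of the affine and Finsler-Ricci tensors and the transverse Laplace condition, which the paper leaves as ``can be checked explicitly.''
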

\begin{proof}
First of all, since $\D u$ is parallel, it follows by Prop. \ref{prop:ab_cc_pre} that any $F$ defined by \eqref{eq:nullmKroptrivialRicciFlat} is Ricci-flat, since the Ricci tensor of the Lorentzian metric (which vanishes, as can be checked explicitly) must coincide with the affine Ricci tensor of $F$, which therefore also vanishes, implying that also the Finsler-Ricci tensor vanishes, by Lemma \ref{lem:RicciTensors}. 

Conversely, suppose that $F$ is Ricci-flat. Since $F$ is locally metrizable it can be written in the standard form \eqref{eq:standard_pp_wave_m_Kropina} with $a$ and $b$ given by \eqref{eq:original_pp_wave_metricl} and \eqref{eq:original_1forml}, respectively. Since the 1-form $b = \D u$ is parallel with respect to the corresponding metric $a$, it follows that the affine connection \bluetext{of} $F$ coincides with the \bluetext{Levi-Civita} connection \bluetext{of} $a$. So since $F$ is Ricci-flat, $a$ is Ricci-flat as well and hence it follows from Lemma \ref{lem:coordchange} that $F$ can be written locally in the form \eqref{eq:nullmKroptrivialRicciFlat}, as desired.
\end{proof}
%
%
%
With this, we have classified (locally, at least) all locally metrizable, Ricci-flat $m$-Kropina metrics in $(1+3)$D with constant causal character.

\section{Discussion}

\noindent In this paper, we have studied the local and global metrizability as well as Ricci-flatness of $m$-Kropina spaces of Berwald type. First of all, we obtained necessary and sufficient conditions for the metrizability of Berwald $m$-Kropina metrics $F = \alpha^{1+m}\beta^{-m}$ of arbitrary signature. More precisely, Theorem \ref{theor:metrizability_main} shows that the canonical affine connection of $F$ can locally be obtained as the Levi-Civita connection of some pseudo-Riemannian metric if and only if the Ricci tensor constructed from the affine connection is symmetric. In addition, the theorem gives a third equivalent characterization of local metrizability in terms of the covariant derivative of the defining 1-form $\beta$ with respect to the defining pseudo-Riemannian metric $\alpha$. Using these characterizations, we have classified all locally metrizable $m$-Kropina metrics whose 1-forms have a constant causal character \bluetext{(Prop. \ref{prop:null_metrization_classification} and Cor. \ref{prop:non-nullmetrizable_global})}. These results are consistent with, and extend, the findings obtained in (Heefer et al., 2023)\cite{Heefer2023mKropNull}, where only closed null 1-forms $\beta$ were considered.

In general, global metrizability is a stronger notion than local metrizability. Nevertheless, in the special case where the first de Rham cohomology group of the underlying manifold is trivial (which is true of simply connected manifolds, for instance), we have shown that global metrizability is equivalent to local metrizability. Hence, in this case, our necessary and sufficient conditions for local metrizability also characterize global metrizability. 

Finally, we have classified all Ricci-flat, locally metrizable $m$-Kropina metrics in $(3+1)$D whose 1-forms have a constant causal character. As it turns out, an $m$-Kropina space is locally metrizable and Ricci-flat if and only if it is affinely Ricci-flat---meaning that the Ricci tensor constructed from the canonical affine connection vanishes. We have explicitly constructed \bluetext{(locally)} all affinely Ricci-flat $m$-Kropina spaces of Berwald type \bluetext{(Prop. \ref{prop:poep} and Prop. \ref{prop:Ricci_flat_F})}. 

In part, the results obtained in this paper should be viewed as a small step in furthering our understanding of the pseudo-Riemann metrizability of Berwald spaces in general.\cite{Fuster_2020,Heefer2023mKropNull,voicu2024metrizability} On the other hand, our results have consequences for Finsler extensions of Einstein's general theory of relativity. More precisely, they constrain the possible types of vacuum solutions of Berwald $m$-Kropina type to Pfeifer and Wohlfarth's field equation in Finsler gravity\cite{Pfeifer:2011xi,Pfeifer:2013gha,Hohmann_2019}, as these are typically Ricci-flat.\footnote{In fact, we expect that any vacuum solution of Berwald $m$-Kropina \bluetext{type} is Ricci-flat, although this has only been proven in the case with non-null 1-form\cite{Fuster:2018djw}.} 
In particular, our classification implies that any Ricci-flat vacuum solution of Berwald $m$-Kropina type with non-null 1-form must be trivial. Indeed, as an application, we considered an explicit $m$-Kropina metric from the literature used in cosmology that looks non-trivial at first sight, yet we showed that it is trivial after all. Thus, our results may also be seen as a step toward the complete classification of vacuum solutions of Berwald $m$-Kropina type to Pfeifer and Wohlfarth's field equation.

\section{Acknowledgements}
\noindent S. Heefer wants to thank M. S\'anchez for asking several inspiring questions regarding the difference between local and global metrizability.

\bibliography{VGRmetrizability}

\begin{thebibliography}{22}%
\makeatletter
\providecommand \@ifxundefined [1]{%
 \@ifx{#1\undefined}
}%
\providecommand \@ifnum [1]{%
 \ifnum #1\expandafter \@firstoftwo
 \else \expandafter \@secondoftwo
 \fi
}%
\providecommand \@ifx [1]{%
 \ifx #1\expandafter \@firstoftwo
 \else \expandafter \@secondoftwo
 \fi
}%
\providecommand \natexlab [1]{#1}%
\providecommand \enquote  [1]{``#1''}%
\providecommand \bibnamefont  [1]{#1}%
\providecommand \bibfnamefont [1]{#1}%
\providecommand \citenamefont [1]{#1}%
\providecommand \href@noop [0]{\@secondoftwo}%
\providecommand \href [0]{\begingroup \@sanitize@url \@href}%
\providecommand \@href[1]{\@@startlink{#1}\@@href}%
\providecommand \@@href[1]{\endgroup#1\@@endlink}%
\providecommand \@sanitize@url [0]{\catcode `\\12\catcode `\$12\catcode
  `\&12\catcode `\#12\catcode `\^12\catcode `\_12\catcode `\%12\relax}%
\providecommand \@@startlink[1]{}%
\providecommand \@@endlink[0]{}%
\providecommand \url  [0]{\begingroup\@sanitize@url \@url }%
\providecommand \@url [1]{\endgroup\@href {#1}{\urlprefix }}%
\providecommand \urlprefix  [0]{URL }%
\providecommand \Eprint [0]{\href }%
\providecommand \doibase [0]{http://dx.doi.org/}%
\providecommand \selectlanguage [0]{\@gobble}%
\providecommand \bibinfo  [0]{\@secondoftwo}%
\providecommand \bibfield  [0]{\@secondoftwo}%
\providecommand \translation [1]{[#1]}%
\providecommand \BibitemOpen [0]{}%
\providecommand \bibitemStop [0]{}%
\providecommand \bibitemNoStop [0]{.\EOS\space}%
\providecommand \EOS [0]{\spacefactor3000\relax}%
\providecommand \BibitemShut  [1]{\csname bibitem#1\endcsname}%
\let\auto@bib@innerbib\@empty
\bibitem [{\citenamefont {Szab\'o}(1981)}]{Szabo}%
  \BibitemOpen
  \bibfield  {author} {\bibinfo {author} {\bibfnamefont {Z.}~\bibnamefont
  {Szab\'o}},\ }\bibfield  {title} {\enquote {\bibinfo {title} {Positive
  definite {B}erwald spaces},}\ }\href@noop {} {\bibfield  {journal} {\bibinfo
  {journal} {Tensor}\ }\textbf {\bibinfo {volume} {35}},\ \bibinfo {pages}
  {25--39} (\bibinfo {year} {1981})}\BibitemShut {NoStop}%
\bibitem [{\citenamefont {Vincze}(2005)}]{Vincze2005}%
  \BibitemOpen
  \bibfield  {author} {\bibinfo {author} {\bibfnamefont {C.}~\bibnamefont
  {Vincze}},\ }\bibfield  {title} {\enquote {\bibinfo {title} {A new proof of
  {S}zab{\'o}'s theorem on the {R}iemann-metrizability of {B}erwald
  manifolds},}\ }\href@noop {} {\bibfield  {journal} {\bibinfo  {journal} {Acta
  Math. Paedagog. Nyh{\'a}zi.}\ }\textbf {\bibinfo {volume} {21}},\ \bibinfo
  {pages} {199--204} (\bibinfo {year} {2005})}\BibitemShut {NoStop}%
\bibitem [{\citenamefont {Crampin}(2014)}]{CrampinAveraging}%
  \BibitemOpen
  \bibfield  {author} {\bibinfo {author} {\bibfnamefont {M.}~\bibnamefont
  {Crampin}},\ }\bibfield  {title} {\enquote {\bibinfo {title} {On the
  construction of {R}iemannian metrics for {B}erwald spaces by averaging},}\
  }\href@noop {} {\bibfield  {journal} {\bibinfo  {journal} {Houston journal of
  mathematics}\ }\textbf {\bibinfo {volume} {40}},\ \bibinfo {pages} {737--750}
  (\bibinfo {year} {2014})}\BibitemShut {NoStop}%
\bibitem [{\citenamefont {Fuster}\ \emph {et~al.}(2020)\citenamefont {Fuster},
  \citenamefont {Heefer}, \citenamefont {Pfeifer},\ and\ \citenamefont
  {Voicu}}]{Fuster_2020}%
  \BibitemOpen
  \bibfield  {author} {\bibinfo {author} {\bibfnamefont {A.}~\bibnamefont
  {Fuster}}, \bibinfo {author} {\bibfnamefont {S.}~\bibnamefont {Heefer}},
  \bibinfo {author} {\bibfnamefont {C.}~\bibnamefont {Pfeifer}}, \ and\
  \bibinfo {author} {\bibfnamefont {N.}~\bibnamefont {Voicu}},\ }\bibfield
  {title} {\enquote {\bibinfo {title} {{On the non metrizability of {B}erwald
  {F}insler spacetimes}},}\ }\href {\doibase 10.3390/universe6050064}
  {\bibfield  {journal} {\bibinfo  {journal} {Universe}\ }\textbf {\bibinfo
  {volume} {6}},\ \bibinfo {pages} {64} (\bibinfo {year} {2020})},\ \Eprint
  {http://arxiv.org/abs/2003.02300} {arXiv:2003.02300} \BibitemShut {NoStop}%
\bibitem [{\citenamefont {Heefer}\ \emph {et~al.}(2023)\citenamefont {Heefer},
  \citenamefont {Pfeifer}, \citenamefont {van Voorthuizen},\ and\ \citenamefont
  {Fuster}}]{Heefer2023mKropNull}%
  \BibitemOpen
  \bibfield  {author} {\bibinfo {author} {\bibfnamefont {S.}~\bibnamefont
  {Heefer}}, \bibinfo {author} {\bibfnamefont {C.}~\bibnamefont {Pfeifer}},
  \bibinfo {author} {\bibfnamefont {J.}~\bibnamefont {van Voorthuizen}}, \ and\
  \bibinfo {author} {\bibfnamefont {A.}~\bibnamefont {Fuster}},\ }\bibfield
  {title} {\enquote {\bibinfo {title} {{On the metrizability of m-Kropina
  spaces with closed null one-form}},}\ }\href {\doibase
  https://doi.org/10.1063/5.0130523} {\bibfield  {journal} {\bibinfo  {journal}
  {J. Math. Phys.}\ }\textbf {\bibinfo {volume} {64}},\ \bibinfo {pages}
  {022502} (\bibinfo {year} {2023})},\ \Eprint
  {http://arxiv.org/abs/2210.02718} {arXiv:2210.02718} \BibitemShut {NoStop}%
\bibitem [{\citenamefont {Voicu}\ and\ \citenamefont
  {Elgendi}(2024)}]{voicu2024metrizability}%
  \BibitemOpen
  \bibfield  {author} {\bibinfo {author} {\bibfnamefont {N.}~\bibnamefont
  {Voicu}}\ and\ \bibinfo {author} {\bibfnamefont {S.~G.}\ \bibnamefont
  {Elgendi}},\ }\bibfield  {title} {\enquote {\bibinfo {title} {Metrizability
  of so(3)-invariant connections: {R}iemann versus {F}insler},}\ }\href@noop {}
  {\  (\bibinfo {year} {2024})},\ \Eprint {http://arxiv.org/abs/2404.02980}
  {arXiv:2404.02980} \BibitemShut {NoStop}%
\bibitem [{\citenamefont {Heefer}(2024)}]{HeeferPhdThesis}%
  \BibitemOpen
  \bibfield  {author} {\bibinfo {author} {\bibfnamefont {S.}~\bibnamefont
  {Heefer}},\ }\emph {\bibinfo {title} {Finsler Geometry, Spacetime, {\&}
  Gravity}},\ \href@noop {} {Ph.D. thesis},\ \bibinfo  {school} {Eindhoven
  University of Technology} (\bibinfo {year} {2024}),\ \Eprint
  {http://arxiv.org/abs/2404.09858} {arXiv:2404.09858} \BibitemShut {NoStop}%
\bibitem [{\citenamefont {Fuster}, \citenamefont {Pabst},\ and\ \citenamefont
  {Pfeifer}(2018)}]{Fuster:2018djw}%
  \BibitemOpen
  \bibfield  {author} {\bibinfo {author} {\bibfnamefont {A.}~\bibnamefont
  {Fuster}}, \bibinfo {author} {\bibfnamefont {C.}~\bibnamefont {Pabst}}, \
  and\ \bibinfo {author} {\bibfnamefont {C.}~\bibnamefont {Pfeifer}},\
  }\bibfield  {title} {\enquote {\bibinfo {title} {{Berwald spacetimes and very
  special relativity}},}\ }\href {\doibase 10.1103/PhysRevD.98.084062}
  {\bibfield  {journal} {\bibinfo  {journal} {Phys. Rev. D}\ }\textbf {\bibinfo
  {volume} {98}},\ \bibinfo {pages} {084062} (\bibinfo {year} {2018})},\
  \Eprint {http://arxiv.org/abs/1804.09727} {arXiv:1804.09727} \BibitemShut
  {NoStop}%
\bibitem [{\citenamefont {Fuster}\ and\ \citenamefont
  {Pabst}(2016)}]{Fuster:2015tua}%
  \BibitemOpen
  \bibfield  {author} {\bibinfo {author} {\bibfnamefont {A.}~\bibnamefont
  {Fuster}}\ and\ \bibinfo {author} {\bibfnamefont {C.}~\bibnamefont {Pabst}},\
  }\bibfield  {title} {\enquote {\bibinfo {title} {Finsler $pp$-waves},}\
  }\href {\doibase 10.1103/PhysRevD.94.104072} {\bibfield  {journal} {\bibinfo
  {journal} {Phys. Rev. D}\ }\textbf {\bibinfo {volume} {94}},\ \bibinfo
  {pages} {104072} (\bibinfo {year} {2016})},\ \Eprint
  {http://arxiv.org/abs/1510.03058} {arXiv:1510.03058} \BibitemShut {NoStop}%
\bibitem [{\citenamefont {Kouretsis}, \citenamefont {Stathakopoulos},\ and\
  \citenamefont {Stavrinos}(2009)}]{Kouretsis:2008ha}%
  \BibitemOpen
  \bibfield  {author} {\bibinfo {author} {\bibfnamefont {A.~P.}\ \bibnamefont
  {Kouretsis}}, \bibinfo {author} {\bibfnamefont {M.}~\bibnamefont
  {Stathakopoulos}}, \ and\ \bibinfo {author} {\bibfnamefont {P.~C.}\
  \bibnamefont {Stavrinos}},\ }\bibfield  {title} {\enquote {\bibinfo {title}
  {{The general very special relativity in {F}insler cosmology}},}\ }\href
  {\doibase 10.1103/PhysRevD.79.104011} {\bibfield  {journal} {\bibinfo
  {journal} {Phys. Rev. D}\ }\textbf {\bibinfo {volume} {79}},\ \bibinfo
  {pages} {104011} (\bibinfo {year} {2009})},\ \Eprint
  {http://arxiv.org/abs/0810.3267} {arXiv:0810.3267} \BibitemShut {NoStop}%
\bibitem [{\citenamefont {Cohen}\ and\ \citenamefont
  {Glashow}(2006)}]{Cohen:2006ky}%
  \BibitemOpen
  \bibfield  {author} {\bibinfo {author} {\bibfnamefont {A.~G.}\ \bibnamefont
  {Cohen}}\ and\ \bibinfo {author} {\bibfnamefont {S.~L.}\ \bibnamefont
  {Glashow}},\ }\bibfield  {title} {\enquote {\bibinfo {title} {Very special
  relativity},}\ }\href {\doibase
  https://doi.org/10.1103/PhysRevLett.97.021601} {\bibfield  {journal}
  {\bibinfo  {journal} {Phys. Rev. Lett.}\ }\textbf {\bibinfo {volume} {97}},\
  \bibinfo {pages} {021601} (\bibinfo {year} {2006})},\ \Eprint
  {http://arxiv.org/abs/hep-ph/0601236} {arXiv:hep-ph/0601236} \BibitemShut
  {NoStop}%
\bibitem [{\citenamefont {Gibbons}, \citenamefont {Gomis},\ and\ \citenamefont
  {Pope}(2007)}]{Gibbons:2007iu}%
  \BibitemOpen
  \bibfield  {author} {\bibinfo {author} {\bibfnamefont {G.}~\bibnamefont
  {Gibbons}}, \bibinfo {author} {\bibfnamefont {J.}~\bibnamefont {Gomis}}, \
  and\ \bibinfo {author} {\bibfnamefont {C.}~\bibnamefont {Pope}},\ }\bibfield
  {title} {\enquote {\bibinfo {title} {General very special relativity is
  {F}insler geometry},}\ }\href {\doibase
  https://doi.org/10.1103/PhysRevD.76.081701} {\bibfield  {journal} {\bibinfo
  {journal} {Phys. Rev. D}\ }\textbf {\bibinfo {volume} {76}},\ \bibinfo
  {pages} {081701} (\bibinfo {year} {2007})},\ \Eprint
  {http://arxiv.org/abs/0707.2174} {arXiv:0707.2174} \BibitemShut {NoStop}%
\bibitem [{\citenamefont {Heefer}\ and\ \citenamefont
  {Fuster}(2023)}]{Heefer_2023_Finsler_grav_waves}%
  \BibitemOpen
  \bibfield  {author} {\bibinfo {author} {\bibfnamefont {S.}~\bibnamefont
  {Heefer}}\ and\ \bibinfo {author} {\bibfnamefont {A.}~\bibnamefont
  {Fuster}},\ }\bibfield  {title} {\enquote {\bibinfo {title} {Finsler
  gravitational waves of $(\alpha,\beta)$-type and their observational
  signature},}\ }\href {\doibase 10.1088/1361-6382/acecce} {\bibfield
  {journal} {\bibinfo  {journal} {Class. Quantum Gravity}\ }\textbf {\bibinfo
  {volume} {40}},\ \bibinfo {pages} {184002} (\bibinfo {year} {2023})},\
  \Eprint {http://arxiv.org/abs/2302.08334} {arXiv:2302.08334} \BibitemShut
  {NoStop}%
\bibitem [{\citenamefont {Pfeifer}\ and\ \citenamefont
  {Wohlfarth}(2012)}]{Pfeifer:2011xi}%
  \BibitemOpen
  \bibfield  {author} {\bibinfo {author} {\bibfnamefont {C.}~\bibnamefont
  {Pfeifer}}\ and\ \bibinfo {author} {\bibfnamefont {M.~N.~R.}\ \bibnamefont
  {Wohlfarth}},\ }\bibfield  {title} {\enquote {\bibinfo {title} {{Finsler
  geometric extension of Einstein gravity}},}\ }\href {\doibase
  https://doi.org/10.1103/PhysRevD.85.064009} {\bibfield  {journal} {\bibinfo
  {journal} {Phys. Rev. D}\ }\textbf {\bibinfo {volume} {85}},\ \bibinfo
  {pages} {064009} (\bibinfo {year} {2012})},\ \Eprint
  {http://arxiv.org/abs/1112.5641} {arXiv:1112.5641} \BibitemShut {NoStop}%
\bibitem [{\citenamefont {Pfeifer}(2013)}]{Pfeifer:2013gha}%
  \BibitemOpen
  \bibfield  {author} {\bibinfo {author} {\bibfnamefont {C.}~\bibnamefont
  {Pfeifer}},\ }\emph {\bibinfo {title} {The Finsler spacetime framework:
  backgrounds for physics beyond metric geometry}},\ \href@noop {} {Ph.D.
  thesis},\ \bibinfo  {school} {University of Hamburg} (\bibinfo {year}
  {2013})\BibitemShut {NoStop}%
\bibitem [{\citenamefont {Hohmann}, \citenamefont {Pfeifer},\ and\
  \citenamefont {Voicu}(2019)}]{Hohmann_2019}%
  \BibitemOpen
  \bibfield  {author} {\bibinfo {author} {\bibfnamefont {M.}~\bibnamefont
  {Hohmann}}, \bibinfo {author} {\bibfnamefont {C.}~\bibnamefont {Pfeifer}}, \
  and\ \bibinfo {author} {\bibfnamefont {N.}~\bibnamefont {Voicu}},\ }\bibfield
   {title} {\enquote {\bibinfo {title} {Finsler gravity action from variational
  completion},}\ }\href {\doibase 10.1103/PhysRevD.100.064035} {\bibfield
  {journal} {\bibinfo  {journal} {Phys. Rev. D}\ }\textbf {\bibinfo {volume}
  {100}},\ \bibinfo {pages} {064035} (\bibinfo {year} {2019})},\ \Eprint
  {http://arxiv.org/abs/1812.11161} {arXiv:1812.11161} \BibitemShut {NoStop}%
\bibitem [{Note1()}]{Note1}%
  \BibitemOpen
  \bibinfo {note} {For alternative equivalent definitions, see (Javaloyes et
  al., 2022)\cite {javaloyes2023einsteinhilbertpalatini} and references
  therein.}\BibitemShut {Stop}%
\bibitem [{\citenamefont {Kropina}(2006)}]{Kropina}%
  \BibitemOpen
  \bibfield  {author} {\bibinfo {author} {\bibfnamefont {V.}~\bibnamefont
  {Kropina}},\ }\bibfield  {title} {\enquote {\bibinfo {title} {On projective
  two-dimensional {F}insler spaces with special metric},}\ }\href@noop {}
  {\bibfield  {journal} {\bibinfo  {journal} {Trudy Sem. Vektor. Tenzor.
  Anal.}\ }\textbf {\bibinfo {volume} {11}},\ \bibinfo {pages} {277--292}
  (\bibinfo {year} {2006})},\ \Eprint {http://arxiv.org/abs/math/0605759}
  {arXiv:math/0605759} \BibitemShut {NoStop}%
\bibitem [{\citenamefont {Matsumoto}(2003)}]{handbook_Finsler_vol2_matsumoto}%
  \BibitemOpen
  \bibfield  {author} {\bibinfo {author} {\bibfnamefont {M.}~\bibnamefont
  {Matsumoto}},\ }\href@noop {} {\emph {\bibinfo {title} {Finsler Geometry in
  the 20th-Century (Published in Handbook of Finsler Geometry, edited by P. L.
  Antonelli)}}}\ (\bibinfo  {publisher} {Kluwer Academic Publishers,
  Dordrecht},\ \bibinfo {year} {2003})\BibitemShut {NoStop}%
\bibitem [{Note2()}]{Note2}%
  \BibitemOpen
  \bibinfo {note} {A slightly different framework is used in (Fuster et al.,
  2018)\cite {Fuster:2018djw}, where the geometry is in fact characterized by
  an $r$-homogeneous Lagrangian (for arbitrary $r$) rather than a Finsler
  metric. But the metric \protect \eqref {eq:metricex} displayed here
  represents the same geometry. It is related to the Lagrangian in (Fuster et
  al., 2018)\cite {Fuster:2018djw} by $L = F^\protect \frac {2}{1+m}$.
  Similarly, our parameter $m$ is related to the parameter $n$ in (Fuster et
  al., 2018)\cite {Fuster:2018djw} by $n = \protect \frac
  {-2m}{1+m}$.}\BibitemShut {Stop}%
\bibitem [{Note3()}]{Note3}%
  \BibitemOpen
  \bibinfo {note} {In fact, we expect that any vacuum solution of Berwald
  $m$-Kropina {type} is Ricci-flat, although this has only been proven in the
  case with non-null 1-form\cite {Fuster:2018djw}.}\BibitemShut {Stop}%
\bibitem [{\citenamefont {Javaloyes}, \citenamefont {S{\'a}nchez},\ and\
  \citenamefont {Villase{\~n}or}(2022)}]{javaloyes2023einsteinhilbertpalatini}%
  \BibitemOpen
  \bibfield  {author} {\bibinfo {author} {\bibfnamefont {M.~{\'A}.}\
  \bibnamefont {Javaloyes}}, \bibinfo {author} {\bibfnamefont {M.}~\bibnamefont
  {S{\'a}nchez}}, \ and\ \bibinfo {author} {\bibfnamefont {F.~F.}\ \bibnamefont
  {Villase{\~n}or}},\ }\bibfield  {title} {\enquote {\bibinfo {title} {The
  {E}instein-{H}ilbert-{P}alatini formalism in pseudo-{F}insler geometry},}\
  }\href
  {https://www.intlpress.com/site/pub/pages/journals/items/atmp/content/vols/0026/0010/a005/}
  {\bibfield  {journal} {\bibinfo  {journal} {Adv. Theor. Math. Phys.}\
  }\textbf {\bibinfo {volume} {26}} (\bibinfo {year} {2022})},\ \Eprint
  {http://arxiv.org/abs/2108.03197} {arXiv:2108.03197} \BibitemShut {NoStop}%
\end{thebibliography}%

\end{document}